\newtheorem{lemma}{Lemma}[section]
\newtheorem{definition}{Definition}[section]
\newtheorem{theorem}{Theorem}[section]
\newtheorem{proposition}{Proposition}[section]
\newtheorem{corollary}{Corollary}[section]
\newcommand{\ddbar}{\sqrt{-1}\partial\bar{\partial}}
\newcommand{\polydisc}{\frac{3}{4}\mathcal{P}_k}
\newcommand{\remark}{\noindent\emph{Remark. }}
\title{The complex Monge-Amp\`{e}re equation on the complement of a divisor}
\author{Fangyu Zou}
\begin{document}
\maketitle
\begin{abstract}
We consider the complex Monge-Amp\`{e}re equation on complete K\"{a}hler manifolds with cusp singularity along a divisor when the right hand side $F$ has rather weak regularity. We proved that when the right hand side $F$ is in some weighted $W^{1,p_0}$ space for $p_0>2n$, the Monge-Amp\'{e}re equation has a classical $W^{3,p_0}$ solution. 
\end{abstract}

\section{Introduction}
\begin{comment}
In the 1950's, E. Calabi raised up the two famous conjectures which are now known as ``Calabi's Conjectures" \citep{Calabi1}{Calabi2}. One of conjectures is about prescibing the volume form of a K\"{a}hler metric in a fixed K\"{a}hler class. This conjecture was later solved by S.-T. Yau in 1970's \citep{Yau1}. In particular, when the first Chern class $c_1(M)=0$, Yau's solution implies the existence of Ricci-flat metrics, which are known as \emph{Calabi-Yau} metrics nowadays. Another conjecture raised by Calabi involves the \emph{K\"{a}hler-Einstein} metrics with "cosmological constant" $\lambda$, so $Ric(\omega) = \lambda \omega$. The constant $\lambda$ can be normalized so that $\lambda \in \{-1,0,1\}$, depending on $c_1(M)$ being negative, zero and positive, respectively. When $\lambda=0$, this is just the Calabi-Yau case as above; hen $\lambda = -1$, the existence of such metrics is solved by Aubin and Yau; while $\lambda=1$, namely, the Fano case, the existence of K\"{a}hler-Einstein metrics is much involved. It is now well known there are obstructions for the existence of K\"{a}hler-Einstein metrics on Fano manifolds. A series of mathematicians have made contributions to this conjecture (see \citep{Mutsushima} \citep{Futaki} \citep{Tian} \citep{Yau} etc.). In 2012, this conjecture was settled through a series of work od CDS \citep{CDS1} \citep{CDS2} \citep{CDS3}. 
\end{comment}

Let $(\bar{M},\omega_0)$ be a compact K\"{a}hler manifold of complex dimension $n$, in which we consider a divisor $D$ with only \emph{simple normal crossings} with decomposition $D=\sum_{j=1}^N D_j$ into smooth irreducible components. Let $[D_j]$ be the associated line bundle to each $D_j$, endowed with a smooth hermitian metric $|\cdot|_j$, and $\sigma_j\in \mathcal{O}([D_j])$ be a holomorphic defining section such that $D_j=\{\sigma_j=0\}$ for each $j$. Let $\rho_j=-\log(|\sigma_j|^2_j)$. Up to multiplying $|\cdot|_j$ by a positive constant or a smooth positive function, we can assume that $|\sigma_j|_j^2\leq e^{-1}$ so that $\rho_j\geq 1$ out of $D_j$. Note that $\ddbar\rho_j$ extends to a \emph{smooth} real $(1,1)$-form on the whole $\bar{M}$, which lies in the class $2\pi c_1([D_j])$. Let $\rho=\prod_{j=1}^N \rho_j$. Let $\lambda>0$ be a real parameter sufficiently large. Set
\begin{equation*}
\omega = \lambda \omega_0 + \ddbar\log{\rho}
= \lambda \omega_0 + \sum_{j=1}^N \ddbar\log(-\log|\sigma_j|_j^2), 
\end{equation*}
%=&\ \lambda \omega_0 + \sum_{j=1}^N \ddbar\log(\rho_j) \\
then $\omega$ defines a genuine K\"{a}hler form on $M=\bar{M}-D$. It has properties that it is complete, with finite volume, with cusp singularity along $D$ and has injectivity radius going to zero. 

The purpose of this paper is to study the Monge-Amp\'{e}re equation 
\begin{equation}\label{equ14}
(\omega + \ddbar{\phi})^n = e^F\omega^n, \,\  \int_M (e^F-1) dV = 0
\end{equation}
when the right hand side $F$ has rather weak regularity. We show that when $F$ is in some \emph{weighted} $W^{1,p_0}$ space for $p_0> 2n$, more precisely, when 
\begin{equation}
I(F,p_0):=\int_M (|F|^{p_0}+|\nabla F|^{p_0})\rho^{\frac{p_0-2}{2n-2}}dV < \infty,
\end{equation}
the equation (\ref{equ14}) has a classical solution $\phi$ in $W^{3,p_0}$ (not in the weighted sense). In particular, we proved

\begin{theorem}\label{thm1}
Let $\bar{M}$ be a compact K\"{a}hler manifold of complex dimension $n$ and $D$ be an divisor on $\bar{M}$ with only simple normal crossings. Let $M=\bar{M}-D$ and $\omega$ be the reference K\"{a}hler metric on $M$ constructed above. For any function $f$ satisfying $I(F,p_0)< \infty$ for some $p_0> 2n$, the Monge-Amper\'{e} equation (\ref{equ14}) has a classical solution $\phi$ in $W^{3,p_0}(M)$.
\end{theorem}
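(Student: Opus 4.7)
The plan is to combine an approximation procedure with uniform a priori estimates of order $W^{3,p_0}$. Specifically, I would approximate $F$ by a sequence of smooth, compactly supported functions $F_k$ on $M$, invoke the already-established existence theory for the complex Monge--Amp\`ere equation on complete K\"ahler cusp manifolds with smooth right-hand side to obtain smooth solutions $\phi_k$, derive uniform estimates on $\phi_k$ that depend only on $I(F,p_0)$ and the geometry of $(\bar M, \omega_0, D)$, and then pass to the limit.

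For the approximation step, let $\chi_k$ be smooth cut-off functions equal to $1$ on $\{\rho \le k\}$ and supported in $\{\rho \le 2k\}$, and set $F_k = \chi_k F + c_k$, where the constant $c_k$ is chosen so that the compatibility condition $\int_M(e^{F_k}-1)\,dV = 0$ holds. Then $F_k$ is smooth and compactly supported in $M$, $F_k \to F$ in the weighted $W^{1,p_0}$ norm, and $I(F_k,p_0) \le C \cdot I(F,p_0)$. For each such $F_k$, existence and smoothness of a solution $\phi_k$ to $(\omega + \ddbar\phi_k)^n = e^{F_k}\omega^n$ is known from the work of Cheng--Yau, Kobayashi and Tian--Yau on complete K\"ahler manifolds with cusps and uniformly bounded geometry in quasi-coordinate charts.

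The core of the argument is then a uniform a priori bound $\|\phi_k\|_{W^{3,p_0}(M)} \le C\bigl(I(F,p_0), \omega, \lambda\bigr)$, carried out in three layers. First, a $C^0$ bound via a Moser iteration on $(M,\omega)$: testing the difference of Monge--Amp\`ere measures against $|\phi_k|^{q-2}\phi_k$ and integrating by parts produces the standard iteration inequality, in which the weight $\rho^{(p_0-2)/(2n-2)}$ is exactly the one matching the Sobolev inequality for the cusp metric $\omega$. Second, a Laplacian estimate via Yau's trick applied to $\log(n+\Delta\phi_k) - A\phi_k$, using the uniformly bounded bisectional curvature of $\omega$ in quasi-coordinates and the generalized maximum principle for complete manifolds, with the usual $L^\infty$ hypothesis on $F$ replaced by its weighted $W^{1,p_0}$ bound through Sobolev embedding. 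Third, the $W^{3,p_0}$ bound by differentiating the equation once, freezing coefficients chart by chart in the quasi-coordinate atlas, invoking Calder\'on--Zygmund $L^{p_0}$ theory for the linearized elliptic operator inside each chart, and then reassembling the local estimates against the weight using the bounded-geometry property.

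The principal obstacle I expect is the first layer, namely the $L^\infty$ estimate on $\phi_k$ from only a weighted $W^{1,p_0}$ bound on $F_k$. Classical Kolodziej-type pluripotential methods do not apply directly because $(M,\omega)$ has infinite diameter and $D$ is a locus of infinite distance, so the iteration has to be set up so that the weight $\rho^{(p_0-2)/(2n-2)}$ absorbs both the degeneracy of $\omega^n$ near $D$ and the loss coming from the Sobolev exponent; achieving constants that are uniform in $k$ is the delicate point. Once the $C^0$ bound is secured, the remaining second- and third-order estimates are adaptations of the Yau and Calabi arguments to the quasi-coordinate setting, and the limit is then extracted by diagonalization of $\phi_k$ in $C^{2,\alpha}_{\mathrm{loc}}$ (via the embedding $W^{3,p_0} \hookrightarrow C^{2,\alpha}$), with the Monge--Amp\`ere equation and the integral normalization surviving by dominated convergence.
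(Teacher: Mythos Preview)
Your overall architecture (approximate $F$, solve, get uniform $W^{3,p_0}$ bounds, extract a limit) matches the paper's, but two of your steps have genuine gaps that the paper handles differently.

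First, the existence you invoke is not what Cheng--Yau, Kobayashi and Tian--Yau actually supply. Their continuity method on cusp manifolds solves the \emph{perturbed} equation $(\omega+\ddbar\phi)^n=e^{F+\epsilon\phi}\omega^n$ with $\epsilon>0$; the case $\epsilon=0$ (the one you want for each $F_k$) is precisely the open problem. The paper therefore keeps the $\epsilon$-perturbation throughout: for each smooth compactly supported $F_k$ and each $\epsilon\in(0,1]$ one has a solution $\phi_{\epsilon,k}$, and the uniform estimates are proved to be independent of \emph{both} $\epsilon$ and $k$, with the limit taken in both parameters at the end. The perturbation also plays a second role you have not accounted for: since $\phi_{\epsilon,k}$ is obtained by continuity in $C^{k,\alpha}_{qc}$, one has crude a priori bounds (depending on $\epsilon$ and on higher norms of $F_k$) which guarantee that all the integrands arising in integration by parts lie in $L^1$, so that Gaffney's version of Stokes' theorem kills the boundary terms. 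Without this, the integration-by-parts identities underlying the Moser iteration on a complete noncompact manifold are not justified.

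Second, your proposed Laplacian estimate via Yau's inequality and a generalized maximum principle cannot work with only weighted $W^{1,p_0}$ control on $F$. Yau's computation produces a term $-\Delta F/\mathrm{tr}_g g'$, and any maximum-principle argument needs a pointwise bound on $\Delta F$, which Sobolev embedding from $W^{1,p_0}$ does not give (you only get $C^{0,\alpha}$). The paper avoids the maximum principle entirely for the $C^1$ and $C^2$ estimates: following Chen--He, it multiplies the differential inequality for $w=e^{-A\phi}\mathrm{tr}_g g'$ by $w^{2p}$, integrates, moves the $\Delta F$ onto $\nabla F$ by integration by parts, and runs a Moser iteration against the weighted Sobolev inequality for $\omega$. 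A key point, absent from your outline, is that the higher-order term $\theta w^{n/(n-1)}$ in the inequality $\Delta' w\ge \theta w^{n/(n-1)}-C-e^{-A\phi}\Delta F$ is retained and used to absorb the constants coming from the infinite weighted measure; in the compact case this term is simply discarded. The same integral mechanism, together with an analogous differential inequality for $e^{-A(\phi)}|\nabla\phi|^2$, also yields a uniform gradient estimate, which the paper establishes before the Laplacian bound.
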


This regularity problem in the compact K\"{a}hler manifold setting has been proved by X. Chen and W. He in \cite{Chen-He}. They proved that if $(M,\omega)$ is a compact K\"{a}hler manifold, then the equation (\ref{equ14}) has a classical solution in $W^{3,p_0}$ once the right hand side $F$ is in $W^{1,p_0}$ for some $p_0>2n$. In Yau's seminal resolution of the Calabi conjecture \cite{Yau1}, the maximum principle is used in a significant way in the $C^2$ estimate of the potential function, where the proof depends heavily on the $C^0$ norm of $\Delta F$. Since we  have weaker regularity on $F$, the main issue is that we can not apply maximum principle to get $C^2$ estimate. One main innovation of \cite{Chen-He} is that they derive the gradient estimate and Laplacian estimate by integration methods. The main tool is the Moser's iteration technique (see \cite{Moser}).

Our result can be viewed as the non-compact version of Chen-He's theorem. We will follow their idea of seeking an integration method for the gradient and Laplacian estimates. Where our case differs from the compact case is two folds: first, we need consider the boundary term when we do integration by parts; second, the usual Sobolev inequality fails in our context.

%Theorem (\ref{thm1}) is proved by an approximation argument. First of all, we can assume that $F$ is a smooth function with compact support since we can approximate it by functions in $C^\infty_c(M)$. Compare to the compact 

To overcome the issue, we consider the following the $\epsilon$-perturbed equation of (\ref{equ14}):
\begin{equation}\label{equ1}
(\omega+\ddbar{\phi_\epsilon})^n= e^{F+\epsilon\phi_\epsilon}\omega^n,\,\, \int_M (e^F-1)dV = 0
\end{equation}
for $\epsilon\in (0,1]$. Equation (\ref{equ1}) has been well studied by Cheng-Yau \cite{Cheng-Yau}, R. Kobayashi \cite{Kobayashi} and Tian-Yau \cite{Tian-Yau} to derive K\"{a}hler-Einstein metrics with negative curvature on $(M,\omega)$ assuming $K_{\bar{M}}+D$ ample. However, the existence of solution $\phi_\epsilon$ of (\ref{equ1}) is proved by the continuity method in quasi-coordinates without additional assumption of the ampleness of $K_{\bar{M}}+D$. 

The idea here is that assuming $F$ is smooth and compactly supported, by maximum principle we can first derive the a priori estimates of $\phi_\epsilon$ depending on the parameter $\epsilon$ and norms of higher derivatives of $F$. This garantees the integrand of boundary terms in the integration by parts are at least in $L^1$. Then we apply a theorem of Gaffney-Stokes \cite{Gaffney-Stokes} to show the boundary terms vanish. Hence, we can do integration by parts as in the compact case. To emphasize, this is just a technical step to make the integration by parts work through. The final estimate will be uniform in $\epsilon$ and only depends on $I(F,q_0)$.

On the other hand, to deal with the problem of lack of Sobolev inequlity, we borrow the weighted Sobolev inequality developed in \cite{AUV} instead. There is still ssome serious issue need to deal with as a result of that the measure (the volume form multiplied by the weight) is not finite. A key obervation is that one higher order term in the Chen-He's inequalites dominates the constants and make the Moser's iteration possible in our case, while in the compact case this high order term can be directly dropped off.

The main idea of this paper is to derive a uniform $W^{3,p_0}$ estimate of the solutions $\phi_\epsilon$ of the $\epsilon$-perturbed equation, following by taking a converging subsequence. We follow Chen-He's routine to first derive uniform $C^1$ and $C^2$ estimates. In this process, we can assume that $F\in C^\infty_c(M)$ since we can approximate $F$ by $C^\infty_c(M)$ funtions in the weighted Sobolev spaces when $I(F,p_0)< \infty$ (see Lemma \ref{app_lem} in section 6). In particular, we have
 
\begin{theorem}\label{thm2}
Suppose $F\in C^\infty_c(M)$ satisfies $I(F,p_0)<\infty$ for some $p_0>2n$. If $\phi_\epsilon$ is a solution to the perturbed equation (\ref{equ1}), then there exists a constant $C=C(I(F,p_0),p_0,m,\omega)$ such that for all $\epsilon\in (0,1]$,
\begin{equation}
|\nabla \phi_\epsilon|\leq C.
\end{equation}
%where the constant $C$ depends on the uniform bound of $||\phi_\epsilon||_{L^\infty}$, $||f||_{W^{1,p_0}_\beta}$, $p_0$, $M$, $g$, and $n$. 
\end{theorem}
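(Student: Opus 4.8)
The plan is to follow the Chen–He strategy of a Moser iteration applied to a suitable elliptic quantity built from $|\nabla\phi_\epsilon|^2$, but carried out with the weighted Sobolev inequality of \cite{AUV} in place of the ordinary one, and with an intermediate step that legitimizes integration by parts on the non-compact manifold $M$. Throughout I abbreviate $\phi=\phi_\epsilon$, $\omega'=\omega+\ddbar\phi$, and write $g$, $g'$ for the associated metrics. First I would record the differentiated Monge–Ampère equation: applying $\nabla$ to $\log\det(g'_{i\bar j})=F+\epsilon\phi+\log\det(g_{i\bar j})$ gives an identity of the schematic form $g'^{i\bar j}\nabla_i\nabla_{\bar j}(\text{something}) = \nabla F+\epsilon\nabla\phi+(\text{curvature of }g)$, which is the standard starting point for the Calabi-type $C^1$ estimate. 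The key computation is to find a function $u$ — essentially $u=e^{-c(\phi-\inf\phi)}(|\partial\phi|^2_{g}+1)$ or Chen–He's variant using the trace $\mathrm{tr}_g g'$ — for which one gets a differential inequality of the form
\begin{equation*}
\Delta' u \geq -C_1 u - C_2 |\nabla F| u^{1/2} - \cdots
\end{equation*}
with $C_1,C_2$ depending only on $\omega$ (its curvature bounds, which are uniform on $M$ by construction of $\omega$). Here it is essential that the cusp metric $\omega$ has bounded geometry so the curvature terms produce genuine constants.

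Next I would set up the Moser iteration. Multiplying the differential inequality by $u^{p-1}\eta^2$ for a cutoff $\eta$ (needed because $M$ is non-compact) and integrating against $dV'=e^{F+\epsilon\phi}dV$, one integrates by parts. This is exactly where the non-compactness bites, and it is the step I expect to be the main obstacle: the boundary/decay terms must be shown to vanish. The proposal is to exploit the hypothesis $F\in C^\infty_c(M)$: away from $\mathrm{supp}\,F$ the equation is $\omega'^n=e^{\epsilon\phi}\omega^n$, and the a priori estimates for (\ref{equ1}) obtained by the maximum principle (in quasi-coordinates, following Cheng–Yau / Tian–Yau) give pointwise control of $\phi$ and its derivatives near $D$ — in particular $u$ and $|\nabla u|$ are in $L^1(dV)$ near the cusp — so that Gaffney's theorem \cite{Gaffney-Stokes} applies to the complete metric $g$ and makes the integrated Stokes' formula exact with no boundary contribution. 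I would stress, as the paper already flags, that these $\epsilon$-dependent bounds are used only to justify the integration by parts; they play no role in the final constant.

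With integration by parts justified, one arrives at the Caccioppoli-type inequality $\int \eta^2|\nabla u^{p/2}|^2 \lesssim p\!\int(|\nabla\eta|^2+1)u^p + p\!\int \eta^2 |\nabla F| u^{p-1/2}$, and then applies the weighted Sobolev inequality of \cite{AUV}: $\big(\int |v|^{2n/(n-1)}\rho^{\alpha}dV\big)^{(n-1)/n}\le C\int(|\nabla v|^2+v^2)\,dV$ for the appropriate exponent $\alpha$ — which is precisely why the weight $\rho^{(p_0-2)/(2n-2)}$ appears in $I(F,p_0)$: Hölder applied to the term $\int \eta^2|\nabla F|u^{p-1/2}$ against this weighted $L^{2n/(n-1)}$ bound on powers of $u$ produces exactly $\|\nabla F\|$ measured with weight $\rho^{(p_0-2)/(2n-2)}$, i.e. a piece of $I(F,p_0)^{1/p_0}$. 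The subtlety the paper highlights — that the weighted measure has infinite total mass, so the usual absorption of lower-order constants across iteration steps fails — is handled by the observation that a higher-order term in the Chen–He inequality (the curvature-squared / $u^2$ term) dominates, giving a reverse-Hölder step $\|u\|_{L^{\chi p}} \le (Cp)^{1/p}\|u\|_{L^p}$ with $\chi=n/(n-1)>1$ and summable exponents $\sum p_k^{-1}<\infty$; iterating from $p=p_0/2$ and passing $k\to\infty$ yields $\sup_M u \le C(I(F,p_0),p_0,n,\omega)\,\|u\|_{L^{p_0}(dV')}$. Finally I would bound the starting $L^{p_0}$ norm: $\int u^{p_0}dV' \lesssim \int(|\nabla\phi|^{2p_0}+1)dV'$ is controlled by the $W^{2,\cdot}$/Laplacian estimate (or, self-consistently, by feeding a low power back through the same iteration together with the $C^0$ bound and $\int(e^F-1)dV=0$), giving the uniform constant and hence $|\nabla\phi_\epsilon|\le C$. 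The cutoff $\eta$ is chosen as a standard logarithmic cutoff in $\rho$, for which $\int|\nabla\eta|^2 dV$ stays bounded as the cutoff exhausts $M$ — this is where completeness and finite volume of $\omega$ enter — so letting $\eta\to 1$ costs nothing.
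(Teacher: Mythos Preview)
Your overall strategy matches the paper's --- Chen--He differential inequality, Gaffney--Stokes to justify integration by parts, weighted Sobolev, Moser iteration --- but three concrete technical points as written would not go through.

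First, the differential inequality you record, $\Delta' u \geq -C_1 u - C_2|\nabla F|u^{1/2}-\cdots$, omits the term that actually drives the iteration. With the specific choice $A(t)=(B+2)t-\tfrac{t^2}{2C_0}$ (so $A''<0$), the combination $-A''|\nabla'\phi|^2_{g'}+(A'-B)\,\mathrm{tr}_{g'}g$ is bounded below via AM--GM by $C_1|\nabla\phi|^{2/n}$, and the resulting inequality reads
\[
\Delta' u \;\geq\; C_1 u^{1+1/n} - C_2 \;+\; C_3\,\mathrm{tr}_g g' \;-\; C_4|\nabla F|\,u^{1/2}.
\]
The positive $u^{1+1/n}$ is the ``higher-order term'' you allude to (it is not a curvature-squared or $u^2$ term). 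After multiplying by $u^p$ and integrating, the paper keeps $-\int u^{p-1/2}(C\,u^{1+1/n}-C')\,dV$ on the right and bounds it by $CK^p$ on $\{u\le K\}$, then rescales $u=Kv$. This is precisely what lets the iteration close despite the weighted measure having infinite mass; without it your Caccioppoli inequality does not self-improve.

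Second, your Caccioppoli step hides a metric mismatch. Integrating $u^p\Delta' u$ by parts produces $\int p\,u^{p-1}|\nabla' u|^2_{g'}\,dV'$, a $g'$-gradient, while the weighted Sobolev inequality is for $g$. Passing between them would require a bound on $\mathrm{tr}_g g'$, i.e.\ Theorem~\ref{thm3}, which is circular. The paper instead retains the $C_3\,\mathrm{tr}_g g'$ term above and uses the pointwise bound $|\nabla' u|^2_{g'}+\mathrm{tr}_g g'\geq 2|\nabla u|_g$; this yields an $L^1$ bound on $|\nabla u^{p+1/2}|$ (not $L^2$), and the iteration runs via the $W^{1,1}\!\to L^{2n/(2n-1)}$ weighted Sobolev applied to $u^p\rho^{-1}$, with step ratio $\tfrac{2n}{2n-1}q_0^{-1}>1$ rather than $\tfrac{n}{n-1}$.

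Third, the starting norm. Invoking the Laplacian estimate is again circular. The paper starts the iteration at $L^1$ and bounds $\int|\nabla\phi|^2\,dV=-\int\phi\,\Delta\phi\,dV$ using only the $C^0$ estimate (Proposition~\ref{prop}), Gaffney--Stokes (so $\int\Delta\phi\,dV=0$), and finite volume. The cutoff $\eta$ is then unnecessary.
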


\begin{theorem}\label{thm3}
Suppose $F\in C^\infty_c(M)$ satisfies the conditions in Theorem \ref{thm1}. If $\phi_\epsilon$ is a solution to the perturbed (\ref{equ1}), then there exists a constant $C=C(I(F,p_0),p_0,m,\omega)$ such that for all $\epsilon\in (0,1]$,
\begin{equation}
|\Delta \phi_\epsilon|\leq C.
\end{equation}
%where the constant $C$ depends on the uniform bound of $||\phi_\epsilon||_{L^\infty}$, $||f||_{W^{1,p_0}_\beta}$, $p_0$, $M$, $g$, and $n$. 
\end{theorem}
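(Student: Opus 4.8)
The plan is to bound $u_\epsilon := n + \Delta\phi_\epsilon = \operatorname{tr}_\omega\omega_\epsilon$ from above by a Moser iteration in which the weighted Sobolev inequality of \cite{AUV} replaces the ordinary one, and in which a super-linear term coming from the curvature of $\omega$ keeps the iteration constants under control; together with the elementary lower bound for $u_\epsilon$ this gives $|\Delta\phi_\epsilon| \le C$. Here $\omega_\epsilon := \omega + \ddbar\phi_\epsilon > 0$. Since $F\in C^\infty_c(M)$ by hypothesis, $\phi_\epsilon$ is a priori smooth and, for each fixed $\epsilon$, decays at the cusp ends (from the continuity-method construction of \cite{Cheng-Yau,Kobayashi,Tian-Yau}) fast enough that every integration by parts below has vanishing boundary contribution by the Gaffney--Stokes theorem \cite{Gaffney-Stokes}; this $\epsilon$-dependent decay is purely a device to legitimize the computation and never enters the final constant. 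I will also use the uniform zeroth-order bound $\|\phi_\epsilon\|_{C^0}\le C$, the gradient bound $\|\nabla\phi_\epsilon\|_{C^0}\le C$ of Theorem \ref{thm2}, and the fact that $\|F\|_{C^0}\le C$ by the weighted Sobolev embedding (valid since $p_0>2n$) — all $C$'s depending on $F$ only through $I(F,p_0)$. Writing $G_\epsilon := F+\epsilon\phi_\epsilon$ and $\mu_1\ge\cdots\ge\mu_n>0$ for the eigenvalues of $\omega_\epsilon$ relative to $\omega$, so $\prod_i\mu_i = e^{G_\epsilon}$ with $\|G_\epsilon\|_{C^0}\le C$, AM--GM gives at once both $u_\epsilon \ge n e^{G_\epsilon/n}\ge c>0$ and the crucial lower bound $\operatorname{tr}_{\omega_\epsilon}\omega = \sum_i\mu_i^{-1} \ge c\,u_\epsilon^{1/(n-1)}$.

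Next I would write down the differential inequality. Because $\omega$ is a complete cusp metric of bounded geometry, there is $B = B(\omega)\ge 0$ bounding its holomorphic bisectional curvature from below, and the Aubin--Yau computation applied to the perturbed equation \eqref{equ1} gives
\[
\Delta_{\omega_\epsilon}\log u_\epsilon \ \ge\ -B\,\operatorname{tr}_{\omega_\epsilon}\omega \ +\ \frac{\Delta_\omega F}{u_\epsilon}\ -\ C_1 ,
\]
the constant $C_1$ absorbing the scalar curvature of $\omega$ and the term $\epsilon\,\Delta_\omega\phi_\epsilon / u_\epsilon = \epsilon(u_\epsilon-n)/u_\epsilon$ (bounded, using $u_\epsilon\ge c$). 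Putting $w := \log u_\epsilon - (B+1)\phi_\epsilon$ and using $\Delta_{\omega_\epsilon}\phi_\epsilon = n - \operatorname{tr}_{\omega_\epsilon}\omega$ turns the bad curvature term into a good one:
\[
\Delta_{\omega_\epsilon} w \ \ge\ \operatorname{tr}_{\omega_\epsilon}\omega\ +\ \frac{\Delta_\omega F}{u_\epsilon}\ -\ C_2 \ \ge\ c\,e^{w/(n-1)}\ +\ \frac{\Delta_\omega F}{u_\epsilon}\ -\ C_2 ,
\]
the last step using $\operatorname{tr}_{\omega_\epsilon}\omega\ge c\,u_\epsilon^{1/(n-1)}$ and that $w$ differs from $\log u_\epsilon$ by the bounded quantity $(B+1)\phi_\epsilon$. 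In particular $w$ is bounded below, and bounding $w$ above is the same as bounding $u_\epsilon$ above.

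For the iteration I would multiply this inequality by $e^{\beta w}$, $\beta\ge 0$, integrate against $dV_{\omega_\epsilon} = e^{G_\epsilon}dV$, and integrate by parts. The Laplacian term produces $-\beta\int e^{\beta w}|\nabla w|^2_{\omega_\epsilon}\,dV_{\omega_\epsilon}$; moving it to the other side and using $|\nabla w|^2_{\omega_\epsilon}\ge u_\epsilon^{-1}|\nabla w|^2_\omega$ together with $dV_{\omega_\epsilon}\asymp dV$ and $u_\epsilon^{-1}\asymp e^{-w}$ yields a genuine Dirichlet integral $\int|\nabla\psi|^2_\omega\,dV$ with $\psi := e^{(\beta-1)w/2}$; the super-linear term contributes $c\int e^{(\beta+\frac{1}{n-1})w}\,dV$, which stays on the favourable side. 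The term $\int e^{\beta w} u_\epsilon^{-1}\Delta_\omega F\,dV_{\omega_\epsilon}$ is integrated by parts once more — this is where only one derivative of $F$ survives — producing integrals of $|\nabla F|_\omega$ against $|\nabla u_\epsilon|_\omega$ and against $|\nabla\phi_\epsilon|_\omega$ times powers of $u_\epsilon$; Young's and Hölder's inequalities with the weight $\rho^{(p_0-2)/(2n-2)}$ convert these into a small, absorbable multiple of the Dirichlet term plus $I(F,p_0)^{1/p_0}$ times weighted $L^q$-integrals of powers of $u_\epsilon$. Feeding the resulting Caccioppoli-type inequality into the weighted Sobolev inequality of \cite{AUV} — whose weight is precisely the one the exponent $\tfrac{p_0-2}{2n-2}$ is tuned to match — gives a reverse-Hölder inequality $\|u_\epsilon\|_{L^{\chi q}(d\mu)}\le\big(C(\beta,I(F,p_0),p_0,n,\omega)\big)^{1/q}\,\|u_\epsilon\|_{L^{q}(d\mu)}$ with a fixed $\chi>1$ along a geometric sequence of exponents; the base case reduces to an a priori integral bound, ultimately $\int_M u_\epsilon\,dV = n\operatorname{Vol}(M)<\infty$ by Stokes. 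Iterating and summing the constants then yields $\|u_\epsilon\|_{C^0}\le C(I(F,p_0),p_0,n,\omega)$, hence $\Delta\phi_\epsilon\le C$; combined with $\Delta\phi_\epsilon = u_\epsilon - n\ge c-n$ this gives $|\Delta\phi_\epsilon|\le C$.

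The main obstacle is the one flagged in the introduction: the weighted measure $\rho^{(p_0-2)/(2n-2)}dV$ has infinite total mass, so the constants $C(\beta,\dots)$ above are genuinely worse than in the compact case and, taken naively, are not summable along the iteration. The resolution is that the super-linear term $\operatorname{tr}_{\omega_\epsilon}\omega\ge c\,u_\epsilon^{1/(n-1)}$ — simply discarded in Chen--He's compact argument because it has a favourable sign — must here be retained on the left-hand side, so that the extra integrability it provides exactly compensates the growth of the iteration constants. Secondary technical points are: carrying out the Young/Hölder bookkeeping so that every intermediate constant depends on $F$ only through $I(F,p_0)$ (using that $\|F\|_{C^0}$ and the lower bound for $u_\epsilon$ are themselves controlled by $I(F,p_0)$); checking that the weights on the two sides of the Sobolev inequality are compatible with the powers of $u_\epsilon$ generated by the estimate, so the iteration closes on a single $\epsilon$-independent family of weighted norms; and verifying the a priori finiteness of all integrals appearing before the integrations by parts, which is where the $\epsilon$-dependent smoothness and decay of $\phi_\epsilon$ and the Gaffney--Stokes theorem are used.
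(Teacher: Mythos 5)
Your proposal is correct and follows essentially the same line as the paper: after the Aubin--Yau inequality, you twist by $e^{-(B+1)\phi}$, exploit the superlinear term $\operatorname{tr}_{\omega_\epsilon}\omega\ge c\,u_\epsilon^{1/(n-1)}$ from AM--GM, integrate by parts on the $\Delta F$ term (justified by Gaffney--Stokes with a priori $\epsilon$-dependent decay), run the weighted Sobolev inequality with the weight $\rho^{(p_0-2)/(2n-2)}$, and iterate, initializing with $\int u_\epsilon\,dV=n\operatorname{Vol}(M)$. The only cosmetic difference is that you iterate on $e^{\beta w}$ for $w=\log u_\epsilon-(B+1)\phi_\epsilon$ whereas the paper iterates on $w^{2p}$ for $w=e^{-A\phi}\operatorname{tr}_g g'$, which is the same thing after exponentiating; and you correctly single out the role the retained superlinear term plays in keeping the iteration constants summable despite the infinite-mass weighted measure, which is exactly the paper's point.
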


The arrangement of this paper is the following: In section 2 we set up the ingredients for the proof of main theorems; in section 3 we cite a proof of Auvray on uniform $C^0$ estimate with slight modification; section 4 and 5 are devoted to the proof of Theorem \ref{thm2} and Theorem \ref{thm3}; in section 4 we first derive a $C^{2,\alpha}$ estimate in quasi-coordiates and finally the $W^{3,p_0}$ estimate. The main result Theorem \ref{thm1} is proved in the end of the paper. 
\vspace{1em}

\noindent \textbf{Convention of notations}: With a little abuse of notation, we will use the K\"{a}hler form $\omega$ to denote the reference metric, while in some cases it is also denoted as $g$. Throughout this paper, $dV,\, \nabla$ and $\Delta$ denote the volume form, the Levi-Civita connection and the Laplacian operator of the reference metric $\omega$, respectively; $dV',\, \nabla'$ and $\Delta'$ will be those of the metric $\omega_{\phi_\epsilon}=\omega+\ddbar{\phi_\epsilon}$. For simplicity of notations, we will also drop the subscript $\epsilon$ from $\phi_\epsilon$ when this is no ambiguity. The constant ``$C$" without subscript may vary from line to line, while if there is subscript, then it is some fixed constant.  Constants in this paper will only depend on $(I(F,p_0),p_0,n,\omega)$ unless specifically pointed out.
\vspace{1em}

\textbf{Acknowledgement:} {The author is very grateful to his advisor Prof. Xiuxiong Chen for introducing this problem  and for his consistent support during the proof. The author is also thankful to Yuanqi Wang, Gao Chen, Ruijie Yang and Santai Qu for useful discussions.}

\section{Preliminaries}

In this section we set up the ingredients for the proof of the main theorems. 
%First, we give the explicit construction of quasi-coordinates near the divisor. The quasi-coordinate charts are introduced first by Cheng-Yau to define new H\"{o}lder spaces for which the Schauder estimate works perfectly. The usual H\"{o}lder spaces are not convenient to use in this context because the injective radius of these Poincar\'{e} metrics are zero. We need these quasi-coordinates in order to establish the usual Poincar\'{e} inequality, the wighted Sobolev inequality and the Sobolev estimate in our context. The main idea here is to express the Sobolev norms (resp. wighted Sobolev norms) with the help of Sobolev norms (resp. wighted Sobolev norms) on the pullbacks. The motivation here is for the sake of getting uniform Poincar\'{e} and Sobolev constants. We inform the reader that this section is mainly cited from Auvray's work.

%\subsection{Notations and definitions}
%Throughout this paper, $\bar{M}$ is a compact K\"{a}hler manifold and $D$ is an divisor on $\bar{M}$ with only simple normal crossings. Let $M=\bar{M}\backslash D$. The metric $\omega$ is the background Poincar\'{e} metric constructed in the introduction. The connection, norm, and volume form associated to $\omega$ will be simply denoted as $\nabla,\ |\cdot|$ and $dV$, while the connection, norm and volume form of $\omega_\phi=\omega+\ddbar{\phi}$ will be added with a subscript $\phi$, namely, $\nabla_\phi,\ |\cdot|_\phi$ and $dV_\phi$, respectively.
\subsection{The reference metric}

Let us quickly recall the construction of the reference metric $\omega$. Let $(\bar{M},\omega_0)$ be a compact K\"{a}hler manifold of complex dimension $n$, in which we consider a divisor $D$ with simple normal crossings with decomposition $D=\sum_{j=1}^N D_j$ into smooth irreducible components. For each $j$, let $\sigma_j$ be a holomorphic defining section of $D_j$. We can assume that $\rho_j:=-\log(|\sigma_j|^2)\geq 1$ out of $D_j$. Note that $\ddbar{\rho_j}$ extends to a smooth real $(1,1)$-form on the whole $\bar{M}$, whose class is $2\pi c_1([D_j])$. Let $\rho=\prod_{j=1}^N \rho_j$. For some postive real parameter $\lambda>0$, set 

\begin{equation}
\omega = \lambda \omega_0 - \ddbar\log{\rho}
= \lambda \omega_0 - \sum_{j=1}^N \ddbar\log(-\log|\sigma_j|^2).
\end{equation}

\begin{lemma}
For $\lambda>0$ sufficiently large, $\omega$ defines a K\"{a}hler metric on $M=\bar{M}-D$.
\end{lemma}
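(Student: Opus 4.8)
The plan is to work locally near the divisor $D$ and show that the candidate form $\omega = \lambda\omega_0 - \ddbar\log\rho$ is positive definite, with the parameter $\lambda$ absorbing the (bounded) negative contributions away from $D$ while the terms $-\ddbar\log\rho_j$ supply positivity in the degenerate directions near $D$. First I would fix a coordinate polydisc $U$ on $\bar M$ adapted to the simple normal crossing divisor, with coordinates $(z_1,\dots,z_n)$ so that $D\cap U = \{z_1\cdots z_k = 0\}$; on such a chart $|\sigma_j|_j^2 = |z_j|^2 e^{-\varphi_j}$ for a smooth function $\varphi_j$, so $\rho_j = -\log|z_j|^2 + \varphi_j$. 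The key computation is then to expand
\begin{equation*}
-\ddbar\log\rho_j = -\frac{\ddbar\rho_j}{\rho_j} + \frac{\sqrt{-1}\,\partial\rho_j\wedge\bar\partial\rho_j}{\rho_j^2}.
\end{equation*}
Since $\ddbar\rho_j$ extends smoothly across $D$ and $\rho_j\ge 1$, the first term is a smooth form divided by something bounded below, hence uniformly bounded (in the $\omega_0$-norm) on $U$; the second term is manifestly a nonnegative $(1,1)$-form. The essential point is that this second term degenerates only in the ``angular'' directions and controls the metric in the $z_j$-direction like the Poincaré cusp metric $\frac{\sqrt{-1}\,dz_j\wedge d\bar z_j}{|z_j|^2(\log|z_j|^2)^2}$, up to lower order corrections coming from $\varphi_j$.

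Next I would patch these local estimates: away from a fixed neighborhood of $D$ all the $\rho_j$ are bounded above and below, $\log\rho$ is smooth, so $-\ddbar\log\rho$ is a fixed smooth form on the compact set $\bar M\setminus(\text{nbhd of }D)$, and choosing $\lambda$ large enough makes $\lambda\omega_0 - \ddbar\log\rho > 0$ there by compactness. Near $D$, I would combine the two observations above: write $-\ddbar\log\rho = \sum_j(-\ddbar\log\rho_j)$ (using $\log\rho = \sum_j\log\rho_j$), bound the "bad" pieces $-\ddbar\rho_j/\rho_j$ by a constant times $\omega_0$ uniformly, and then choose $\lambda$ so that $\lambda\omega_0$ dominates all of these error terms, leaving the nonnegative gradient-squared terms $\sum_j \sqrt{-1}\,\partial\rho_j\wedge\bar\partial\rho_j/\rho_j^2$ plus a strictly positive multiple of $\omega_0$; this shows $\omega\ge \frac{\lambda}{2}\omega_0 > 0$ as a form, hence $\omega$ is a genuine Kähler metric on $M$. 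Note the subtlety that cross terms between distinct $\rho_i,\rho_j$ do not arise at the level of $\ddbar\log\rho$ because $\log$ turns the product into a sum; this is why $\rho$ was defined as a product rather than a sum.

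I would also record, as part of the proof or immediately after, the more precise local shape
\begin{equation*}
\omega \sim \lambda\omega_0 + \sum_{j=1}^k \frac{\sqrt{-1}\,dz_j\wedge d\bar z_j}{|z_j|^2\,(\log|z_j|^2)^2}
\end{equation*}
up to uniformly bounded and uniformly controlled lower-order terms, since this is what actually gets used later (completeness, finite volume, cusp asymptotics, quasi-coordinates). The main obstacle is purely bookkeeping: making sure that the smooth correction terms $\varphi_j$ and the mixed terms $\partial\varphi_j\wedge\bar\partial\rho_j$ etc. are genuinely lower order relative to the Poincaré-type leading term and do not destroy positivity — i.e. that the constant in $-\ddbar\rho_j/\rho_j \le C\omega_0$ can be taken uniform over a neighborhood of $D$ and independent of how many components pass through a given point — and then fixing the threshold value of $\lambda$ once and for all. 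There is no deep difficulty here; the content is the elementary inequality that $-\ddbar\log t = -t^{-1}\ddbar t + t^{-2}\,\sqrt{-1}\partial t\wedge\bar\partial t$ has a good sign in the leading order when $t = \rho_j \to \infty$.
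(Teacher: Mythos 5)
Your proof is correct and rests on exactly the same mechanism as the paper's: the quotient-rule identity
\begin{equation*}
-\ddbar\log\rho_j = \frac{\sqrt{-1}\,\partial\rho_j\wedge\bar\partial\rho_j}{\rho_j^2} - \frac{\ddbar\rho_j}{\rho_j},
\end{equation*}
the observation that the first summand is nonnegative, and the observation that $\ddbar\rho_j$ extends smoothly over $\bar M$ while $\rho_j\ge 1$, so $\ddbar\rho_j/\rho_j$ is dominated by a fixed multiple $\lambda_j\omega_0$. The only stylistic difference is that the paper runs this argument globally on the compact $\bar M$ in one stroke (taking $\lambda=\sum_j\lambda_j$), whereas you pass to local coordinates and split into a near-$D$ and an away-from-$D$ case; this split is harmless but unnecessary, since the global bound $\ddbar\rho_j\le\lambda_j\omega_0$ already holds uniformly by compactness of $\bar M$ and smoothness of $\ddbar\rho_j$ across $D$. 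The extra local asymptotics you record are not needed for this lemma but do correspond to the content of the subsequent Lemma 2.2 in the paper.
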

\begin{proof}
This follows from a simple computation. Note that
\[
-\ddbar{\log\rho_j} = \frac{\sqrt{-1}\partial \rho_j \wedge \bar{\partial} \rho_j}{\rho_j^2} -
 \frac{ \ddbar \rho_j}{\rho_j}.
\]
The first summand is a postive $(1,1)$-form. For each $j$, there is some positive $\lambda_j>0$ such that $\ddbar{\rho_j} \leq \lambda_j \omega_0$ on $\bar{M}$. Hence, $\lambda_j\omega_0+\ddbar{\log \rho_j}>0$ on $\bar{M}-D_j$. Let $\lambda = \sum_j \lambda_j$, then 
\[
\omega = \lambda \omega_0 -\ddbar{\log\rho}=\sum_j (\lambda_j \omega_0 - \ddbar{\log \rho_j}) > 0
\]
on $M=\bar{M}-D$.
\end{proof}

A simple model for this type of metric is the punctured disc $\Delta^*=\Delta-\{0\}$ with the Poincar\'{e} metric $\omega = -\ddbar{\log(-\log|z|^2)} = \frac{\sqrt{-1}dz\wedge d\bar{z}}{|z|^2\log^2|z|^2}$. Indeed, the asymptotics of the reference metric near $D$ can be compared with this kind of model metric. 

Soppose $P$ is a point in a crossing of codimension $k$, say, $D_1\cap\cdots\cap D_k$. Take an open neighborhood $U$ of $P$ which is biholomorphic to coordinate chart $(\Delta^n; z_1,\ldots,z_n)$. The simple normal crossing assumption allows us to write $D\cap U = (D_1\cup\cdots\cup D_k)\cap U=\cup_{i=1}^k\{z_i=0\}$, where $z_j=0$ being the equation of $D_j$ in $U$. Then $U\backslash D=(\Delta^*)^k\times \Delta^{n-k}$. Let $\omega_{mdl}$ be the model metric on $(\Delta^*)^{k}\times \Delta^{n-k}$:
\[
\omega_{mdl}= \sum_{j=1}^k \frac{\sqrt{-1}dz_j\wedge d\bar{z}_j}{|z_j|\log^2|z_j|^2}
+ \sum_{j=k+1}^n \sqrt{-1}dz_j \wedge d\bar{z}_j.
\]

\begin{lemma}\label{lemma2}
In the coordinates $(z_1,\ldots,z_k,z_{k+1},\ldots,z_n)$, we have

\begin{equation}\label{8}
\omega = \sum_{j=1}^k \frac{\sqrt{-1}dz_j \wedge d\bar{z}_j}{|z_j|^2\log^2|z_j|^2} + 
\left(\lambda\omega_0 - \sum_{j=k+1}^N \ddbar{\log\rho_j} \right) + O(\rho_1^{-1}+ \cdots + \rho_k^{-1})
\end{equation}

In particular, $\omega$ is quasi-isometric to $\omega_{mdl}$ on $U\backslash D$, i.e., there exists some positive constant $C=C(U,M,\omega)$ such that 
\[
C^{-1}\omega_{mdl} \leq \omega \leq C\omega_{mdl}.
\]
\end{lemma}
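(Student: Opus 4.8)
The plan is a local computation inside the fixed chart $U\cong\Delta^n$. Since $D_j\cap U=\emptyset$ for $j>k$ (after shrinking $U$ so that $\bar U$ meets only $D_1,\dots,D_k$), the form $G:=\lambda\omega_0-\sum_{j=k+1}^N\ddbar\log\rho_j$ is smooth and bounded on $\bar U$ and positive there by the construction in the preceding lemma; it is exactly the middle bracket in \eqref{8}, so everything reduces to expanding $-\ddbar\log\rho_j$ for $j\le k$. On $U$ we may write $|\sigma_j|_j^2=|z_j|^2h_j$ with $h_j$ smooth and positive, so that $\rho_j=-\log|z_j|^2-\log h_j$, $\partial\rho_j=-\tfrac{dz_j}{z_j}-\partial\log h_j$ with $\partial\log h_j$ smooth, and $\ddbar\rho_j=-\ddbar\log h_j$ smooth on all of $U$. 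Substituting into the identity $-\ddbar\log\rho_j=\tfrac{\sqrt{-1}\partial\rho_j\wedge\bar\partial\rho_j}{\rho_j^2}-\tfrac{\ddbar\rho_j}{\rho_j}$ already used in the proof of that lemma, the numerator $\sqrt{-1}\partial\rho_j\wedge\bar\partial\rho_j$ equals $\tfrac{\sqrt{-1}dz_j\wedge d\bar z_j}{|z_j|^2}$ plus $\tfrac{dz_j}{z_j}$ wedged with smooth $(0,1)$-forms plus a smooth $(1,1)$-form; dividing by $\rho_j^2$ and using $\tfrac1{\rho_j^2}=\tfrac1{\log^2|z_j|^2}+O(\rho_j^{-3})$ isolates the leading term $\tfrac{\sqrt{-1}dz_j\wedge d\bar z_j}{|z_j|^2\log^2|z_j|^2}$. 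Summing over $j\le k$ and adding $G$ then gives \eqref{8}, with $O(\rho_1^{-1}+\cdots+\rho_k^{-1})$ understood as a $(1,1)$-form of absolute value $\le C(\rho_1^{-1}+\cdots+\rho_k^{-1})\,\omega_{mdl}$.

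The delicate point, which I expect to require the most care, is that the remainder must be measured against $\omega_{mdl}$, not against the Euclidean metric. Since $\tfrac{dz_j}{z_j}$ has $\omega_{mdl}$-length comparable to $|\log|z_j|^2|\sim\rho_j$ while every smooth form has bounded $\omega_{mdl}$-length on $\bar U$ (because $\omega_{mdl}\ge\omega_{\mathrm{eucl}}$ for $|z_j|<1$), a cross term such as $\tfrac1{\rho_j^2}\,\tfrac{dz_j}{z_j}\wedge\overline{\partial\log h_j}$ is $O(\rho_j^{-1})\,\omega_{mdl}$ rather than merely bounded; the smooth-smooth term over $\rho_j^2$ is $O(\rho_j^{-2})$, one has $\tfrac{\ddbar\rho_j}{\rho_j}=O(\rho_j^{-1})\,\omega_{mdl}$, and $\bigl(\tfrac1{\rho_j^2}-\tfrac1{\log^2|z_j|^2}\bigr)\tfrac{\sqrt{-1}dz_j\wedge d\bar z_j}{|z_j|^2}=O(\rho_j^{-1})\,\omega_{mdl}$ because the bracket is $O(\rho_j^{-3})$.

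For the quasi-isometry, the bound $\omega\le C\omega_{mdl}$ is immediate from \eqref{8}: $\sum_{j\le k}\tfrac{\sqrt{-1}dz_j\wedge d\bar z_j}{|z_j|^2\log^2|z_j|^2}\le\omega_{mdl}$, $G\le C\omega_{mdl}$ by boundedness, and the error is $\le C\omega_{mdl}$ since $\rho_j\ge1$. The reverse bound is not formal, because the remainder in \eqref{8} is small only where every $\rho_j$ is large; instead I would argue factorwise. Dropping the summands with $j>k$ (which are $\ge0$ on $U$), $\omega\ge\sum_{j\le k}(\lambda_j\omega_0-\ddbar\log\rho_j)$ with $\lambda=\sum\lambda_j$ as in the preceding lemma, and for each fixed $j\le k$ I would prove $\lambda_j\omega_0-\ddbar\log\rho_j\ge c_j\,\tfrac{\sqrt{-1}dz_j\wedge d\bar z_j}{|z_j|^2\log^2|z_j|^2}$ on $U\setminus D_j$: where $\rho_j$ is large this follows from the expansion above (the leading term controls a fixed fraction of the model term and absorbs the $O(\rho_j^{-1})$ error), while where $\rho_j$ is bounded (that is, $|z_j|$ bounded away from $0$) the relevant part of $U\setminus D_j$ lies in a compact subset of $\bar M\setminus D_j$ on which $\lambda_j\omega_0-\ddbar\log\rho_j$ is smooth and positive, hence $\ge c\,\omega_0$, the model term being bounded there. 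A parallel case split (near $D$ versus on the relatively compact set $\{|z_i|\ge\delta\ \forall i\}\subset M$) gives $\omega\ge c\,\omega_0$ on $U\setminus D$; combining, $\omega\ge\tfrac12\bigl(\sum_{j\le k}c_j\,\tfrac{\sqrt{-1}dz_j\wedge d\bar z_j}{|z_j|^2\log^2|z_j|^2}+c\,\omega_0\bigr)\ge c''\,\omega_{mdl}$.

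One caveat I would flag: strictly, $U$ should be taken relatively compact in a slightly larger coordinate polydisc, equivalently with $|z_j|$ bounded away from $1$, since $\omega_{mdl}$ blows up as $|z_j|\to1$ while $\omega$ stays bounded there, so $C^{-1}\omega_{mdl}\le\omega$ can only hold away from the outer boundary of $\Delta^n$; this is precisely the neighbourhood of $D$ that matters, so nothing is lost, and the constant is then permitted to depend on $U$ as in the statement.
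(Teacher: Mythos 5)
Your proof of the expansion \eqref{8} is the same computation the paper carries out: writing $|\sigma_j|_j^2 = |z_j|^2 h_j$ (the paper's $e^{f_j}$ is your $h_j$), substituting $\rho_j = -\log|z_j|^2 - \log h_j$ into $-\ddbar\log\rho_j = \frac{\sqrt{-1}\partial\rho_j\wedge\bar\partial\rho_j}{\rho_j^2} - \frac{\ddbar\rho_j}{\rho_j}$, and isolating the singular leading term. Your additional bookkeeping of the remainder's size against $\omega_{mdl}$ rather than $\omega_{\mathrm{eucl}}$ (the $\frac{dz_j}{z_j}$ factor costing one power of $\rho_j$, etc.) is correct and worth spelling out; the paper leaves "$O(\rho_j^{-1})$" unspecified in this respect. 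Where you genuinely depart from the paper is the quasi-isometry: the paper dismisses it with "follows easily from \eqref{8}", whereas you correctly observe that the lower bound does not follow formally, because the error $O(\rho_1^{-1}+\cdots+\rho_k^{-1})$ is only small where all the $\rho_j$ are simultaneously large, yet $U\setminus D$ contains points close to, say, $D_2$ but far from $D_1$. Your factorwise argument — drop the $j>k$ summands, show $\lambda_j\omega_0-\ddbar\log\rho_j \ge c_j\frac{\sqrt{-1}dz_j\wedge d\bar z_j}{|z_j|^2\log^2|z_j|^2}$ for each $j\le k$ by splitting into $\rho_j$ large (use the one-variable expansion, whose error is small) and $\rho_j$ bounded (compactness, smoothness and positivity), then add $\omega\ge c\,\omega_0$ — is correct and cleanly fills in what the paper glosses over. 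Your closing caveat is also right: $\omega_{mdl}$ degenerates as $|z_j|\to 1$ while $\omega$ does not, so $U$ must be taken with the $|z_j|$ bounded away from $1$, which is implicit in the paper's convention that the constant depends on $U$.
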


\begin{proof}
Note that $\lambda \omega_0 - \sum_{j=k+1}^N \ddbar{\log \rho_j}$ is smooth on $D$. For $1 \leq j \leq k$, $|\sigma_j|^2 = e^{f_j}|z_j|^2$ for some smooth $f_j$ through $D$. Thus, $\rho_j = -\log |z_j|^2 - f \sim -\log |z_j|^2$. A simple computation shows that
\[
\begin{aligned}
 -\ddbar{\log \rho_j} 
= &\ \frac{\sqrt{-1}dz_j\wedge d\bar{z}_j}{|z_j|^2\rho_j^2} -\frac{\ddbar{f}}{\rho_j} \\
&\ +  \frac{\sqrt{-1}(z_jdz_j\wedge \bar{\partial f} + \bar{z}_j \partial f\wedge d\bar{z}_j + |z_j|^2\partial f\wedge \bar{\partial}f)}{|z_j|^2\rho_j^2} \\
= &\ \frac{\sqrt{-1}dz_j\wedge d\bar{z}_j}{|z_j|^2\log^2|z_j|^2} + O(\rho_j^{-1}).
\end{aligned}
\]
Sum up for $j=1, \ldots, k$, we obtain (\ref{8}). The second part follows easily from (\ref{8}).
\end{proof}

In light of Lemma \ref{lemma2}, we readily see properties of such metric of Poincar\'{e} type: it is complete, has finite volume, and its injectivity radius goes to 0. In the next lemma, we collect some facts about the reference metric $\omega$ which we will use later.

\begin{lemma}
Let $(M,\omega)$ be constructed above. There exists some positive constant $B$ such that
\begin{itemize}
\item[(1)] $\inf_{i\neq j} R_{i\bar{i}j\bar{j}}\geq -B$;
\item[(2)] $|R|\leq B$;
\item[(3)] $\sup \frac{|\nabla \rho|}{\rho} \leq B.$
\end{itemize}
where $R_{i\bar{i}j\bar{j}}$ and $R$ are the holomorphic sectional curvature and scalar curvature of $(M,\omega)$, respectively. 
\end{lemma}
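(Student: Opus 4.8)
The plan is to reduce all three bounds to the behaviour of $\omega$ near $D$ and then to appeal to the bounded‑geometry picture of Poincaré‑type metrics. Since $\bar M$ is compact, I would first cover $D$ by finitely many coordinate polydiscs $U$ as in Lemma \ref{lemma2}; on the complement of slightly shrunk copies of these, $M$ is exhausted by a fixed compact set $K\Subset M$ on which $\omega$ is a fixed smooth K\"ahler metric and $\rho$ a fixed smooth function with $\rho\ge 1$, so $R_{i\bar i j\bar j}$, $R$, $|\nabla\rho|$ and $\rho^{-1}$ are all bounded on $K$. It therefore remains to bound the three quantities on each $U\setminus D\cong(\Delta^*)^k\times\Delta^{n-k}$, uniformly as one approaches $D$.

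For (1) and (2) I would establish that $(M,\omega)$ has bounded geometry (of order at least $2$) in the sense of Cheng--Yau and Tian--Yau. Cover a punctured neighbourhood of $D$ in $U$ by the standard quasi‑coordinate maps $\Phi\colon\Delta^n_{3/4}\to U\setminus D$, which in each of the first $k$ variables are obtained from the universal covering $\mathbb H\to\Delta^*$ by precomposing with a family of biholomorphisms $\Delta_{3/4}\to\mathbb H$ whose images exhaust $\mathbb H$, and are the identity in the remaining $n-k$ variables. Under $\Phi$ the cusp part $\sum_{j\le k}\frac{\sqrt{-1}dz_j\wedge d\bar z_j}{|z_j|^2\log^2|z_j|^2}$ of (\ref{8}) pulls back to a metric uniformly $C^\infty$‑equivalent to the Euclidean metric on $\Delta^n_{3/4}$, while a holomorphic one‑form $dz_j$ in a cusp direction pulls back to $\Phi_j'(v_j)\,dv_j$ with $|\Phi_j'(v_j)|$ comparable to $|z_j|\log|z_j|^{-1}\to 0$ (and each further differentiation only inserts extra factors of $\log|z_j|^{-1}$). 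Hence the smooth transversal term $\lambda\omega_0-\sum_{j>k}\ddbar\log\rho_j$ and the $O(\sum_{j\le k}\rho_j^{-1})$ error in (\ref{8}) both pull back to tensors that are uniformly bounded on $\Delta^n_{3/4}$ together with their first two derivatives. Consequently $\Phi^*\omega$ is uniformly $C^2$‑equivalent to the Euclidean metric, so the components of its curvature tensor in the associated orthonormal coframe are uniformly bounded; this yields two‑sided bounds on the bisectional curvatures $R_{i\bar i j\bar j}$, in particular (1), and since the scalar curvature is a trace of the curvature tensor, also (2).

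For (3) I would argue directly, without quasi‑coordinates. Writing $\rho=\prod_j\rho_j$ gives $\frac{\nabla\rho}{\rho}=\sum_{j=1}^N\frac{\nabla\rho_j}{\rho_j}$, so it suffices to bound each $|\nabla\rho_j|_\omega/\rho_j$. From the identity $-\ddbar\log\rho_j=\frac{\sqrt{-1}\partial\rho_j\wedge\bar\partial\rho_j}{\rho_j^2}-\frac{\ddbar\rho_j}{\rho_j}$ used in the proof of the first lemma and the expression $\omega=\lambda\omega_0+\sum_l\bigl(\frac{\sqrt{-1}\partial\rho_l\wedge\bar\partial\rho_l}{\rho_l^2}-\frac{\ddbar\rho_l}{\rho_l}\bigr)$, together with $\pm\ddbar\rho_l\le C\omega_0$ on $\bar M$, $\rho_l\ge 1$, and $\omega_0\le C\omega$ (valid once $\lambda$ is large, since the $\sqrt{-1}\partial\rho_l\wedge\bar\partial\rho_l$ terms are nonnegative), one gets $\frac{\sqrt{-1}\partial\rho_j\wedge\bar\partial\rho_j}{\rho_j^2}\le C\omega$ as $(1,1)$‑forms; taking the trace with respect to $\omega$ yields $|\nabla\rho_j|^2_\omega\le C\rho_j^2$, and summing over $j$ gives (3).

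I expect the main obstacle to be the bounded‑geometry claim behind (1) and (2): one must verify that the smooth transversal part and the $O(\rho^{-1})$ error in (\ref{8}) — not merely the forms themselves but their first two covariant derivatives — stay uniformly bounded after pulling back by the quasi‑coordinate maps, which amounts to checking that every differentiation in a cusp direction produces an extra decaying factor of the type $|z_j|(\log|z_j|^{-1})^{O(1)}$. This is by now classical for Poincaré‑cusp metrics (Cheng--Yau, Kobayashi, Tian--Yau), so I would present it as a careful but routine verification rather than a genuinely new difficulty.
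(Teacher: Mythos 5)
Your proposal is correct. For (1) and (2) you and the paper reduce to the same picture — the asymptotics of $\omega$ near $D$ in Lemma \ref{lemma2} — but you then make explicit what the paper only gestures at: the paper simply invokes that the standard cusp factor has holomorphic sectional curvature $-1$ and asserts boundedness without accounting for the smooth transversal part or the $O(\rho^{-1})$ error in (\ref{8}), whereas you pull everything back through quasi-coordinates and appeal to the Cheng--Yau/Tian--Yau bounded-geometry verification that the pulled-back metric is uniformly $C^2$-equivalent to the Euclidean one. This is more work, but it is the argument that actually closes the gap, and it delivers a stronger conclusion (two-sided bounds on all bisectional curvatures, not just a lower bound on $R_{i\bar i j\bar j}$). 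You are right that the differentiation-produces-decay check is classical and routine; just be aware it is not optional.

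For (3) your route is genuinely different from the paper's and, in my view, cleaner. The paper replaces $\omega$ near $D_1$ by the local model metric, splits $\rho=\rho_1\rho'$, and computes $|\nabla\rho_1|/\rho_1$ and $|\nabla\rho'|/\rho'$ separately in coordinates (there is also a small sign slip in that computation, $|\nabla\rho_1|^2\le -|z_1|^2\log^2|z_1|^2(\cdots)$, that should read without the minus sign). Your argument is coordinate-free and global: from $\omega=\lambda\omega_0+\sum_l\bigl(\tfrac{\sqrt{-1}\partial\rho_l\wedge\bar\partial\rho_l}{\rho_l^2}-\tfrac{\ddbar\rho_l}{\rho_l}\bigr)$, the smoothness of $\ddbar\rho_l$ on $\bar M$, $\rho_l\ge 1$, and $\lambda$ large enough that $\omega\ge c\,\omega_0$, one gets directly $\sum_l\tfrac{\sqrt{-1}\partial\rho_l\wedge\bar\partial\rho_l}{\rho_l^2}\le C\omega$, hence each $\tfrac{\sqrt{-1}\partial\rho_j\wedge\bar\partial\rho_j}{\rho_j^2}\le C\omega$ by nonnegativity of the summands; tracing with $\omega$ gives $|\nabla\rho_j|/\rho_j\le C$, and the product rule $\nabla\rho/\rho=\sum_j\nabla\rho_j/\rho_j$ finishes the bound. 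This avoids the local model approximation entirely and is, if anything, preferable. Both routes are valid; what you lose relative to the paper is brevity for (1)--(2), what you gain is rigor there and a slicker proof of (3).
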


\begin{proof}
We only need to consider near the divisor $D$. Suppose $U$ is an open neighborhood of some point on $D$ such that $U\cap D = U\cap (D_1\cup\cdots\cup D_k)=\cup_{j=1}^k \{z_j=0\}$, where $z_j=0$ is the equation of $D_j$ in $U$. The metric $\omega$ has the asymptotics in $U\backslash D$ as stated in (\ref{8}). Note that $(\Delta^*)^k$ with standard cusp metric $\sum_{j=1}^k\frac{\sqrt{-1}dz_j\wedge d\bar{z}_j}{|z_j|^2\log^2|z_j|^2}$ has holomorphic sectional curvature $-1$. Hence, the holomorphic sectional curvature of $\omega$ on $U\backslash D$ is bounded from below and the scalar curvature bounded by some constant on $U\backslash D$.

To see (3), let us assume $U\cap D= U\cap D_1 = \{z_1=0\}$ and $\omega$ is the local model metric $\frac{\sqrt{-1}dz_1\wedge d\bar{z_1}}{|z_1|^2\log^2|z_1|^2} + \sum_{j=2}^n \sqrt{-1}dz_j\wedge d\bar{z}_j$ for simplicity. Set $\rho=\rho_1\rho'$ where $\rho'=\rho_2\cdots \rho_N$ is smooth on $D$. We have $\frac{\nabla \rho}{\rho}=\frac{\nabla \rho_1}{\rho_1}+\frac{\nabla \rho'}{\rho'}$. Note that $\rho'$ is bounded and $|\nabla \rho'|^2= (|z_1|^2\log^2|z_1|^2) |\partial_{z_1}\rho'|^2+\sum_{j=2}^n |\partial_{z_j} \rho'|^2 \leq \sum_{j=1}^n |\partial_{z_j} \rho'|^2$ is bounded on $U\backslash D$. On the other hand, $\rho_1 = -\log|z_1|^2 -f \sim -\log|z_1|^2$, we have $|\nabla \rho_1|^2 \leq -|z_1|^2\log^2|z_1|^2 (|z_1|^{-2}+ |\partial_{z_1} f|^2) + O(1) = \rho_1^2 + o(\rho_1)$. Hence, $\frac{|\nabla \rho_1|}{\rho_1} \rightarrow 1$ as $z_1\rightarrow 0$. Therefore, we have $\frac{|\nabla \rho|}{\rho}$ bounded by some constant on $U\backslash  D$.

Finally, we cover the divisor $D$ by finitely many such $U$'s and take the maximum of those constants as $B$, thus obtain (1), (2) and (3). 
\end{proof}

\subsection{Quasi-coordinates}
The viewpoint of quasi-coordinates has been adopted to study complete K\"{a}hler manifolds by Tian-Yau \cite{Tian-Yau} and R. Kobayashi\cite{Kobayashi}. They are useful to define likewise H\"{o}lder space, while the usual coordinate charts has inconvenience because of the injectivity radius going to zero. In these quasi-coordinates the interior Schauder estimate on complete manifold can be reduced to that on a bounded domain in Euclidean space. In this paper, we will use the quasi-coordinates to establish the weighted Sobolev inequality by reducing them to the bounded domains in Euclidean space.

\begin{definition}
Let $V$ be an open set in $\mathbb{C}^n$. A holomorphic map $\Phi$ from $V$ into a complex manifold $M$ of dimension $n$ is called a \emph{quasi-coordinate map} iff it is  of maximal rank everywhere in $V$. The pair $(V; \mbox{Euclidean coordinates of } \mathbb{C}^n)$ is called a local quasi-coordinate of $M$.
\end{definition}

To construct quasi-coordinates for $(M,\omega)$, we begin with the punctured disc $\Delta^*=\Delta-\{0\}$ with the standard cusp metric $\omega_{cusp}= \frac{\sqrt{-1}dz\wedge d\bar{z}}{|z|^2\log^2|z|^2}$. We start from the universal covering map $\pi:\Delta\rightarrow \Delta^*$, given by $\pi(w)=\exp\left(\frac{w+1}{w-1}\right)$. Formally, it sends 1 to 0. The idea is to restrict $\pi$ to the fixed disc $\frac{3}{4}\Delta$ (disc with radius $3/4$), and compose it with a biholomorphism $\psi_\delta$ of $\Delta$ sending $0$ to $\delta$, where $\delta\in (0,1)$ is a real parameter. To write the formula, we set $\psi_\delta(w)=\frac{w+\delta}{1+\delta w}$, so that the quasi-coordinate maps are given by
\begin{gather}\label{equ19}
\varphi_\delta =  \pi\circ\psi_\delta : \frac{3}{4}\Delta \rightarrow \Delta^*,\, \,  
\varphi_\delta(w) = \exp\left(-\frac{1+\delta}{1-\delta}\frac{w+1}{w-1}\right)
\end{gather}

It is easy to check the following properties of the quasi-coordinate maps.
\begin{itemize}
\item[(1)] ${\varphi_\delta}^*\omega_{cusp} = \frac{\sqrt{-1}dw\wedge d\bar{w}}{(1-|w|^2)^2}$
is independent on $\delta$ and $C^\infty$-quasi-isometric to the Euclidean metric on the ball.

\item[(2)] ${\varphi_\delta}^*(-\log(|z|^2))=2\frac{1+\delta}{1-\delta}\frac{1-|w|^2}{|1-w|^2}$ is mutually bounded with $\frac{1}{1-\delta}$ with fixed factor, i.e., there is constant $C>0$ independent of $\delta$ such that $\frac{1}{C(1-\delta)} \leq {\varphi_\delta}^*(-\log|z|^2) \leq \frac{C}{1-\delta}$ on $\frac{3}{4}\Delta$.

\item[(3)] There exists a constant $\kappa>0$ small (indeed $\kappa\leq e^{-25/7}$) such that $\kappa\Delta^*\subset \cup_{\delta\in (0,1)} \varphi_\delta (\frac{3}{4}\Delta)$. 
\end{itemize}

Now let us consider a crossing $D_1\cap \cdots \cap D_k$ of codimension $k$. For any point on such a crossing, we take an open neighborhoond $U$ such that 
$U\cap D_j=\{z_j=0\},\ j=1,\ldots,k$. Under the coordinates of $(z_1,\ldots,z_n)$, $U$ can be taken so that $U\backslash D$ is biholomorhic to $(\kappa \Delta^*)^k\times \Delta^{n-k}$. Let $\frac{3}{4}\mathcal{P}_k$ denote the polydisc $(\frac{3}{4}\Delta)^k\times \Delta^{n-k}$ in $\mathbb{C}^n$. Let $\delta=(\delta_1,\ldots,\delta_k)\in (0,1)^k$ be a multi-index. Then the quasi-coordinate map is given by
\begin{gather*}
\Phi_\delta: \polydisc \rightarrow (\kappa\Delta^*)^k\times \Delta^{n-k}\\
(w_1,\ldots,w_k,z_{k+1},\ldots,z_m)\mapsto \left(\varphi_{\delta_1}(w_1),\ldots,\varphi_{\delta_k}(w_k),z_{k+1},\ldots,z_m \right)
\end{gather*}
Note that $\cup_\delta \Phi_\delta(\polydisc)$ covers $U\backslash D$.

The quasi-coordinate ``atlas" of $(M,\omega)$ is obtained by covering an open neighborhood of $D$ by our local quasi-coordinate charts, and covering the complement by a finite number of unit balls in $\mathbb{C}^n$.

At this stage we introduce the H\"{o}lder norms and H\"{o}lder spaces using the previously introduced quasi-coordinates for later use.
\begin{definition}
For a non-negative integer $k$, and a real number $\alpha \in (0,1)$, we define
\[
||u||_{C^{k,\alpha}_{qc}(M)} := \sup \{ ||u\circ \Phi||_{C^{k,\alpha}(V)}: (V,\Phi) \text{ \emph{is a quasi-coordinate map}} \}
\]
where the supremum is taken over all our quasi-coordinate maps $(V,\Phi)$. The $||\dot||_{C^{k,\alpha}(V)}$ is the usual H\"{o}lder norm on $V\subset \mathbb{C}^n$. The H\"{o}lder space $C^{k,\alpha}_{qc}(M)$ is 
\[
C^{k,\alpha}_{qc}(M) := \{ u\in C^k_{loc}(M): ||u||_{C^{k,\alpha}_{qc}(M)}< \infty\}.
\]
\end{definition}
 
\subsection{Weighted Sobolev inequality}

%In the following lemma, we seek to express the express integral
The weighted Sobolev inequality stated in this subsection is first proved by Auvray in \cite{AUV}, Lemma 4.4. The following lemma, which is crucial for Auvray's proof, however, is just stated without a proof. For  readers' convenience and completeness, we give a proof here.

We briefly set up the setting in this section. Consider a point on a crossing of codimension $k$. Let $U$ be a polydisc centered at this point so that $U\backslash D$ is biholomorphic to $(\kappa\Delta^*)^k\times \Delta^{n-k}$. For multi-indices $\delta\in (0,1)^k$, let $\Phi_\delta: \frac{3}{4}\mathcal{P}_k \rightarrow U\backslash D$ be the quasi-coordinate maps defined in subsection 2.2. Let $\frac{1}{2}\mathcal{P}_k$ denote the polydisc $(\frac{1}{2}\Delta)^k\times \Delta^{n-k}$. %Suppose $f$ is a function on $U\backslash D$ with $\int_{U\backslash D} |f|\rho dV < \infty$. We have the following lemma.

\begin{lemma}\label{lem3}
There exists a constant $c>0$ depending on $(U,n,\omega)$ and a sequence of multi-indices $(\delta_\ell)$, $\delta_\ell=((\delta_1)_{\ell_1},\ldots,(\delta_k)_{\ell_k})$ such that for any $f \in L^1(U\backslash D)$,
%\begin{equation}
\begin{multline}
c^{-1}\sum_{\ell} A_{\delta_\ell}\int_{\frac{3}{4}\mathcal{P}_k} |{\Phi_{\delta_\ell}}^* f| dV_{g_0}
\leq \int_{U\backslash D} |f| dV 
\leq c \sum_{\ell}  A_{\delta_\ell} \int_{\frac{1}{2}\mathcal{P}_k} |{\Phi_{\delta_\ell}}^* f| d{V}_{g_0}
\end{multline}
%\end{equation}
where $A_{\delta_\ell} = \prod_{j=1}^k \left(1-(\delta_j)_{\ell_j}\right)$, and $g_0$ is the Euclidean metric on $\mathbb{C}^n$. Note that ${\Phi_{\delta_\ell}}^*\rho$ is mutually bounded with $A_{\delta_\ell}^{-1}$, and the pull-back metric ${\Phi_\delta}^*g$ is $C^\infty$-quasi-isometric to the Euclidean metric $g_0$ with fixed factor independent of $\delta$. In particular, if $(f\rho) \in L^1(U\backslash D)$, then
\begin{equation}
c^{-1} \sum_{\ell} \int_{\polydisc} |{\Phi_{\delta_\ell}}^* f| dV_{g_0}
\leq \int_{U\backslash D} |f|\rho dV 
\leq c \sum_{\ell} \int_{\frac{1}{2}\mathcal{P}_k} |{\Phi_{\delta_\ell}}^* f| d{V}_{g_0}
\end{equation}
\end{lemma}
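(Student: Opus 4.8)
The plan is to reduce the estimate on $U \setminus D$ to a countable sum of estimates on the fixed domains $\frac{3}{4}\mathcal{P}_k$ and $\frac{1}{2}\mathcal{P}_k$, by first doing it in one cusp variable and then taking products. Concretely, I would start with $k=1$, so $U \setminus D = \kappa\Delta^* \times \Delta^{n-1}$ and the quasi-coordinate maps are $\varphi_\delta = \pi \circ \psi_\delta$ from $\frac{3}{4}\Delta$ onto $\Delta^*$. The point is to choose a discrete sequence $\delta_1 < \delta_2 < \cdots \to 1$ so that the images $\varphi_{\delta_\ell}(\frac{1}{2}\Delta)$ already cover $\kappa\Delta^*$ (near the puncture) with bounded overlap, while the larger images $\varphi_{\delta_\ell}(\frac{3}{4}\Delta)$ also have bounded overlap (finite multiplicity $N_0$ independent of $\ell$). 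Since $\varphi_\delta$ is, up to a Möbius change, the restriction of the universal cover $\pi$, the images $\varphi_\delta(\frac12\Delta)$ and $\varphi_\delta(\frac34\Delta)$ are roughly annuli $\{ e^{-c_2/(1-\delta)} \le |z| \le e^{-c_1/(1-\delta)} \}$; choosing $1-\delta_{\ell+1} = \theta(1-\delta_\ell)$ for a suitable fixed ratio $\theta \in (0,1)$ makes consecutive annuli overlap in a controlled way and makes the whole family both a cover and of bounded multiplicity. This is the geometric heart of the lemma.

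Next I would translate the covering/bounded-multiplicity statement into the integral inequality. For the upper bound $\int_{U\setminus D}|f|\,dV \le c\sum_\ell A_{\delta_\ell}\int_{\frac12\mathcal{P}_k}|\Phi_{\delta_\ell}^*f|\,dV_{g_0}$: since the $\varphi_{\delta_\ell}(\frac12\Delta)$ cover $\kappa\Delta^*$, we have $\int |f|\,dV \le \sum_\ell \int_{\varphi_{\delta_\ell}(\frac12\Delta)\times\Delta^{n-1}}|f|\,dV$, and on each piece we change variables by $\Phi_{\delta_\ell}$. Here I use property (1) of the quasi-coordinate maps — $\Phi_\delta^* g$ is $C^\infty$-quasi-isometric to $g_0$ with constants independent of $\delta$ — so the Jacobian factor $dV / dV_{g_0}$ is comparable to $1$ uniformly. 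Wait: that cannot be the whole story, since $\Phi_\delta^*\rho$ blows up like $(1-\delta)^{-1} = A_\delta^{-1}$ and the weight $A_{\delta_\ell}$ appears. The resolution is that property (1) as literally stated compares $\varphi_\delta^*\omega_{cusp}$ to $(1-|w|^2)^{-2}\sqrt{-1}dw\wedge d\bar w$, which is itself quasi-isometric to $g_0$ on $\frac34\Delta$ only with $\delta$-independent constants after one accounts for the factor $\frac{1+\delta}{1-\delta}$ hidden in $\varphi_\delta$; tracking that factor through the $k$-fold product is exactly what produces $A_{\delta_\ell} = \prod_j(1-(\delta_j)_{\ell_j})$ in the volume comparison and makes $\Phi_{\delta_\ell}^*\rho$ mutually bounded with $A_{\delta_\ell}^{-1}$. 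So the precise step is: $\int_{\varphi_{\delta_\ell}(\frac12\Delta)\times\Delta^{n-1}}|f|\,dV = \int_{\frac12\mathcal{P}_1}|\Phi_{\delta_\ell}^*f|\,\Phi_{\delta_\ell}^*(dV)$, and $\Phi_{\delta_\ell}^*(dV) \le c\,A_{\delta_\ell}\,dV_{g_0}$ by this uniform comparison. Summing over $\ell$ gives the upper bound with a constant $c$ absorbing the multiplicity $N_0$.

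For the lower bound $c^{-1}\sum_\ell A_{\delta_\ell}\int_{\frac34\mathcal{P}_k}|\Phi_{\delta_\ell}^*f|\,dV_{g_0} \le \int_{U\setminus D}|f|\,dV$: I would run the change of variables in the other direction. On each $\frac34\mathcal{P}_k$, $\int_{\frac34\mathcal{P}_k}|\Phi_{\delta_\ell}^*f|\,dV_{g_0} \le c\,A_{\delta_\ell}^{-1}\int_{\varphi_{\delta_\ell}(\frac34\Delta)\times\Delta^{n-1}}|f|\,dV$ using the reverse comparison $dV_{g_0} \le c\,A_{\delta_\ell}^{-1}\Phi_{\delta_\ell}^*(dV)$ (again $\delta$-uniform). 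Multiplying by $A_{\delta_\ell}$ and summing, the bounded multiplicity $N_0$ of the family $\{\varphi_{\delta_\ell}(\frac34\Delta)\}$ gives $\sum_\ell A_{\delta_\ell}\int_{\frac34\mathcal{P}_k}|\Phi_{\delta_\ell}^*f|\,dV_{g_0} \le c\,N_0\int_{U\setminus D}|f|\,dV$. For the $k > 1$ case everything is a product: choose the sequence $((\delta_j)_{\ell_j})_{\ell_j}$ separately in each of the $k$ cusp directions and take $\delta_\ell$ to range over the product of these sequences; covering, multiplicity, and the comparison constants all multiply, and $A_{\delta_\ell} = \prod_{j=1}^k(1-(\delta_j)_{\ell_j})$ is exactly the product of the one-variable weights. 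Finally, the last displayed inequality is immediate from the main one: since $\Phi_{\delta_\ell}^*\rho$ is mutually bounded with $A_{\delta_\ell}^{-1}$ with $\delta$-independent constants, applying the main inequality to $f$ and using $\int_{U\setminus D}|f|\rho\,dV \asymp \sum$ over the cover of $\int |f\rho|$, one replaces $A_{\delta_\ell}\int|\Phi_{\delta_\ell}^*f|$ by $\int|\Phi_{\delta_\ell}^*(f\rho)|$ up to constants; alternatively just substitute $f \rho$ into the main inequality and note $A_{\delta_\ell}\,\Phi_{\delta_\ell}^*\rho \asymp 1$.

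The main obstacle I anticipate is not any single estimate but getting the combinatorics of the covering right: one must exhibit an explicit sequence $\delta_\ell \to 1$ (geometrically, with $1-\delta_{\ell+1}/(1-\delta_\ell)$ a fixed ratio determined by the $\frac12$ versus $\frac34$ radii and by $\kappa$) for which simultaneously (a) the small images $\varphi_{\delta_\ell}(\frac12\Delta)$ cover $\kappa\Delta^*$ and (b) the large images $\varphi_{\delta_\ell}(\frac34\Delta)$ have multiplicity bounded independent of $\ell$. Both reduce to elementary estimates on the Möbius map $\psi_\delta$ and the exponential $\pi$, namely locating $\varphi_\delta(\{|w|=1/2\})$ and $\varphi_\delta(\{|w|=3/4\})$ as annuli in $z$; property (2) of the quasi-coordinates ($\varphi_\delta^*(-\log|z|^2) \asymp (1-\delta)^{-1}$) is precisely the tool for this, and property (3) guarantees that some such cover of $\kappa\Delta^*$ exists at all. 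Once the cover is fixed, the rest is the uniform change-of-variables bookkeeping sketched above, with the constant $c$ depending on $U$, $n$, $\omega$ through the quasi-isometry constants and the multiplicity bound.
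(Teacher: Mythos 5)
The overall strategy — choose a geometrically spaced sequence $\delta_\ell$, cover $\kappa\Delta^*$ by images of $\frac12\Delta$, control overlap of images of $\frac34\Delta$, and treat the $k\ge2$ case as a product — is the right one and matches the paper's in spirit. But there is a genuine error in the mechanism you propose for where the weight $A_{\delta_\ell}=\prod_j(1-(\delta_j)_{\ell_j})$ comes from, and it would sink the argument as written.

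You write the change of variables as $\int_{\varphi_{\delta_\ell}(\frac12\Delta)\times\Delta^{n-1}}|f|\,dV=\int_{\frac12\mathcal{P}_1}|\Phi_{\delta_\ell}^*f|\,\Phi_{\delta_\ell}^*(dV)$ and then try to extract $A_{\delta_\ell}$ from a purported $\delta$-dependent Jacobian, via "a factor $\frac{1+\delta}{1-\delta}$ hidden in $\varphi_\delta$". Both steps are wrong. Property (1) of the quasi-coordinates is exact: $\varphi_\delta^*\omega_{cusp}=\frac{\sqrt{-1}\,dw\wedge d\bar w}{(1-|w|^2)^2}$ identically, with no $\delta$-dependence, so $\Phi_\delta^*(dV_g)$ is comparable to $dV_{g_0}$ with constants independent of $\delta$ — there is nothing "hidden" to track, and $\Phi_\delta^*(dV)\le cA_\delta\,dV_{g_0}$ is simply false. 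More importantly, $\varphi_\delta$ restricted to $\frac12\Delta$ (or $\frac34\Delta$) is \emph{not} a biholomorphism onto its image: conjugating by the Cayley map, $\frac12\Delta$ is sent to a ball of radius $\frac43\sigma$ with $\sigma=\frac{1+\delta}{1-\delta}$, whose height greatly exceeds $2\pi$ for $\delta$ near $1$; composed with $\exp$, the map wraps around the puncture roughly $\sigma/\pi$ times. So the equality $\int_{\varphi_\delta(\frac12\Delta)}|f|\,dV=\int_{\frac12\Delta}|\varphi_\delta^*f|\,\varphi_\delta^*(dV)$ fails; the right-hand side overcounts by the multiplicity $N(z)=\#\bigl(\varphi_\delta^{-1}(z)\cap\frac12\Delta\bigr)\asymp\sigma\asymp A_\delta^{-1}$. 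It is \emph{this} multiplicity — not a Jacobian — that produces the factor $A_{\delta_\ell}$ in the statement: correctly, $A_\delta\int_{\frac12\Delta}|\varphi_\delta^*f|\,dV_{g_0}\asymp\int_{\varphi_\delta(\frac12\Delta)}|f|\,dV$.

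The paper's proof is built precisely to handle this: it lifts the integral to the universal cover $\exp:\mathbb{C}\to\Delta^*$, turning $\int_{\kappa\Delta^*}|f|\,dV$ into an integral over the half-strip $(-\infty,\log\kappa)\times(-\pi,\pi]$ with respect to the periodic metric $\hat g$. The disc $\frac12\Delta$ corresponds, after $\zeta_\sigma$, to a tall ball of height $\asymp\sigma$, and the comparison between the integral over one period of the strip and the integral over the ball gives exactly the $\sigma^{-1}\asymp A_\delta$ factor (the bound $\int_{I_\sigma\times(-\pi,\pi]}|\hat f|\,dV_{\hat g}\le\frac{18\pi}{\sqrt{119}\,\sigma}\int_{I_\sigma\times J_\sigma}|\hat f|\,dV_{\hat g}$). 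The bounded-overlap count on the $\frac34$-side is then a disjointness computation in $\sigma$. Your covering/overlap picture is the right intuition and can be made rigorous, but you must replace the faulty "Jacobian" step with the correct multiplicity count of $\varphi_\delta$ on the small disc — or, as the paper does, pass to the strip where the counting is transparent.
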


\begin{proof}
The proof is quite technical. We shall begin with the simplest case to show the key point of the proof. Let us consider the model case of Poincar\'{e} disc with standard Poincar\'{e} metric. In this case, $\delta\in (0,1)$ is a real parameter. Let $\sigma =\frac{1+\delta}{1-\delta}$. The quasi-coordinate map $\Phi_{\delta}$ is just $\varphi_\delta(w) = \exp\left(-\frac{1+\delta}{1-\delta}\frac{w+1}{w-1}\right)=\exp \circ \zeta_\sigma(w)$, where $\zeta_\sigma(w) = -\sigma \frac{w+1}{w-1}$. The covering map $\exp:\mathbb{C}\rightarrow \Delta^*$ maps any horizontal strip of width $2\pi$ onto the punctured disc. 
Under the map $\zeta_\sigma$, the disc $\frac{1}{2}\Delta$ is mapped biholomorphically to another disc $B_\sigma$ with center $(-\frac{5}{3}\sigma,0)$ and radius $\frac{4}{3}\sigma$. The union of such balls covers the half strip 
$$(-\infty, \log \kappa)\times (-\pi,\pi]\subseteq \mathbb{R}^2.$$

The idea is to pull back and take the integral over the strip, then cut the strip into small pieces so that each piece is contained in some ball. Each piece is covered by the ball a finite number of multiplicity proportional to the radius of the ball, hence proportional to $\sigma$. To make it precise, let $\hat{g}$ be the pullback metric on $\mathbb{C}$ of the standard cusp metric on the punctured disc by the covering map $\exp$. Let $\hat{f} = f\circ\exp$. Then
\begin{equation*}
\int_{\kappa\Delta^*} |f| d{V} = \int_{(-\infty,\log \kappa)\times (-\pi,\pi]} |\hat{f}|d{V}_{\hat{g}}
\end{equation*}
The rectangle 
$$I_\sigma \times J_\sigma := \left(-\frac{20}{9}\sigma,-\frac{10}{9}\sigma \right] \times \left(-\frac{\sqrt{119}}{9}\sigma,\frac{\sqrt{119}}{9}\sigma\right)$$
is contained in the ball. We have
\begin{equation*}
\int_{I_\sigma\times (-\pi,\pi]}|\hat{f}|dV_{\hat{g}} \leq \frac{18\pi}{\sqrt{119}\sigma}\int_{I_\sigma\times J_\sigma} |\hat{f}|d{V}_{\hat{g}} \leq c\sigma^{-1}\int_{\frac{1}{2}\Delta} |{\varphi_\delta}^*f|dV_{g_0}
\end{equation*}
Now pick a sequence $(\sigma_\ell)$ such that $\sigma_1 = -\frac{3}{5}\log\kappa,\, \sigma_{\ell+1}=2\sigma_\ell$. Then
$(-\infty, \log \kappa)\subseteq \cup I_{\sigma_\ell}$. Hence,
$$\int_{(-\infty,\log \kappa)\times (-\pi,\pi]} |\hat{f}|dV_{\hat{g}}\leq \sum_{\ell=1}^\infty \int_{I_{\sigma_\ell \times (-\pi,\pi]}}|\hat{f}|dV_{\hat{g}}.$$
Namely,
\begin{equation*}
\int_{\kappa \Delta^*} |f|dV \leq c\sum_\ell \sigma_\ell^{-1} \int_{\frac{1}{2}\Delta}|{\varphi_{\delta_\ell}}^*f|dV_{g_0}.
\end{equation*}
Note that $\sigma^{-1}$ is equivalent to $(1-\delta)$. Hence,
\begin{equation}\label{equ20}
\int_{\kappa \Delta^*} |f| dV \leq c'\sum_\ell \int_{\frac{1}{2}\Delta} (1-\delta_\ell)|{\varphi_{\delta_\ell}}^*f|dV_{g_0}.
\end{equation}
This gives us the right half inequality.

On the other hand, $\frac{3}{4}\Delta$ is mapped by $\phi_{\sigma}$ to a ball with center $(-\frac{25}{7}\sigma,0)$ and radius $\frac{24}{7}\sigma$. Let 
$$I'_\sigma = \left(-7\sigma,-\frac{1}{7}\sigma\right],\quad J'_\sigma = \left(-\frac{24}{7}\sigma,\frac{24}{7}\sigma\right).$$ 
Then
\begin{equation}\label{equ21}
\int_{\frac{3}{4}\Delta} |{\phi_\sigma}^*f| dV_{g_0} \leq \int_{I'_\sigma\times J'_\sigma} |\hat{f}|dV_{\hat{g}}\leq \frac{48\sigma}{7\pi}\int_{I'_\sigma\times (-\pi,\pi]} |\hat{f}|dV_{\hat{g}}
\end{equation}
For the same sequence $(\sigma_\ell)$ in (\ref{equ20}), these $I'_{\sigma_\ell}$ are overlapped. But each of them only intesect with finitly many others. To see this,
two balls do not intersect if the distance of the centers is bigger than the sum of their radius. The balls $B_{\sigma_{\ell_1}}$ and $B_{\sigma_{\ell_2}}$ do not intersect (suppose $\ell_1>\ell_2$) if
\begin{gather*}
\frac{25}{7}(2^{\ell_1}-2^{\ell_2})> \frac{24}{7}(2^{\ell_1}+2^{\ell_2})\quad
\Rightarrow \quad \ell_1-\ell_2>\log_249
\end{gather*}
Hence, each ball intersects with no more than $2\log_2 49 <16$ balls. Therefore,
\begin{equation*}
\sum_\ell \int_{I'_{\sigma_\ell}\times (-\pi,\pi]} |\hat{f}|dV_{\hat{g}} \leq 16\int_{(-\infty,\log\kappa)\times (-\pi,\pi]} |\hat{f}|dV_{\hat{g}} =16 \int_{\kappa\Delta^*} |f|dV
\end{equation*}
Combine with (\ref{equ21}), and that $\sigma^{-1}$ is equivalent to $(1-\delta)$, we have
\begin{equation}
\sum_\ell (1-\delta_\ell) \int_{\frac{3}{4}\Delta}|{\varphi_{\sigma_\ell}}^* f|dV_{g_0} \leq c'' \int_{\kappa \Delta^*} |f|dV 
\end{equation}
We get the left half inequality.

Now come back to our setting. When $k=1$, then $U\backslash D = \kappa \Delta^*\times \Delta^{n-1}$. We can assume the metric $g$ on $U\backslash D$ is the standard model metric $\frac{\sqrt{-1}dz_1\wedge d\bar{z}_1}{|z_1|^2\log^2|z_1|^2}+\sum_{j=2}^n \sqrt{-1}dz_j\wedge d\bar{z}_j$. The argument goes exactly the same after multiplying each integral region by $\Delta^{n-1}$; if $k\geq 2$, we write the integral as multiple integral and treat the variables in this manner in order. 
\end{proof}

% LEMMA OF THE WEIGHTED SOBLEV INEQUALITY
Now we prove the following weighted Sobolev inequality on $(M,\omega)$.
\begin{lemma}[cf. \cite{AUV}, Lemma 4.4]\label{sob_inequ}
There exists a positive constant $C= C(p,n,\omega)$ such that for any function $v\in W^{1,p}_{loc}$, 
\begin{equation}
\left(\int_M |v|^{q} \rho dV\right)^{1/q} \leq C \left(\int_M \left(|v|^{p}+|\nabla v|^{p}\right) \rho dV\right)^{1/p}
\end{equation}
as long as $q\geq p$ and $\frac{1}{p}\leq \frac{1}{2n}+\frac{1}{q}$.
\end{lemma}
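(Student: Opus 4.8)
The plan is to reduce the stated inequality to the ordinary Sobolev inequality on a \emph{fixed} bounded domain in $\mathbb{R}^{2n}$ by pulling $v$ back along the quasi-coordinate maps, and to control all constants by means of the covering Lemma \ref{lem3}. First fix a finite cover of $M$: finitely many polydisc neighborhoods $U_1,\dots,U_m$ of points of $D$, with $U_a\setminus D$ biholomorphic to $(\kappa\Delta^*)^{k_a}\times\Delta^{n-k_a}$ and carrying the quasi-coordinate maps $(\Phi^{(a)}_{\delta_\ell})\colon\tfrac{3}{4}\mathcal{P}_{k_a}\to U_a\setminus D$ of Subsection 2.2, together with finitely many unit balls $B_1,\dots,B_{m'}$ covering the compact set $\bar M\setminus\bigcup_a U_a$. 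If $\int_M(|v|^p+|\nabla v|^p)\rho\,dV=\infty$ there is nothing to prove, so assume it is finite; applying Lemma \ref{lem3} with $f=|v|^p+|\nabla v|^p$ and using that $\Phi_\delta^*g$ is $C^\infty$-quasi-isometric to $g_0$ with a $\delta$-independent factor (so $|\nabla_{g_0}(\Phi_\delta^*v)|_{g_0}\le C\,\Phi_\delta^*|\nabla v|$), we get that every pull-back $\Phi^{(a)}_{\delta_\ell}{}^{*}v$ lies in $W^{1,p}$ of the corresponding fixed polydisc.

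Work on one chart $U=U_a$, abbreviating $k=k_a$ and $\Phi_{\delta_\ell}=\Phi^{(a)}_{\delta_\ell}$. Apply Lemma \ref{lem3} with $f=|v|^q$ (whose $\rho$-weighted integral over $U\setminus D$ will be finite by the end of the argument): the right-hand side of its last display gives $\int_{U\setminus D}|v|^q\rho\,dV\le c\sum_\ell\int_{\frac{1}{2}\mathcal{P}_k}|\Phi_{\delta_\ell}^*v|^q\,dV_{g_0}$. Since $\tfrac{3}{4}\mathcal{P}_k$ is a \emph{fixed} bounded Lipschitz domain in $\mathbb{R}^{2n}$ and $L^q(\tfrac{1}{2}\mathcal{P}_k)\subseteq L^q(\tfrac{3}{4}\mathcal{P}_k)$, the Euclidean Sobolev embedding $W^{1,p}(\tfrac{3}{4}\mathcal{P}_k)\hookrightarrow L^q(\tfrac{3}{4}\mathcal{P}_k)$ (which holds because $\tfrac1p\le\tfrac1{2n}+\tfrac1q$), raised to the power $q$, gives $\int_{\frac{1}{2}\mathcal{P}_k}|u|^q\,dV_{g_0}\le C\big(\int_{\frac{3}{4}\mathcal{P}_k}(|u|^p+|\nabla_{g_0}u|^p)\,dV_{g_0}\big)^{q/p}$ with $C$ depending only on $p$, $n$ and the fixed domain, hence independent of $\delta_\ell$. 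Summing over $\ell$ and using $\sum_\ell a_\ell^{q/p}\le(\sum_\ell a_\ell)^{q/p}$ (valid since $q/p\ge1$ and $a_\ell\ge0$), then the quasi-isometry bound $|\Phi_{\delta_\ell}^*v|^p+|\nabla_{g_0}\Phi_{\delta_\ell}^*v|^p\le C\,\Phi_{\delta_\ell}^*(|v|^p+|\nabla v|^p)$, and finally the \emph{left}-hand side of the last display of Lemma \ref{lem3} with $f=|v|^p+|\nabla v|^p$, we arrive at
\[
\int_{U\setminus D}|v|^q\rho\,dV\;\le\;C\Big(\int_{U\setminus D}\big(|v|^p+|\nabla v|^p\big)\rho\,dV\Big)^{q/p}.
\]
Taking $q$-th roots gives the asserted inequality with $M$ replaced by $U_a\setminus D$; finiteness of the right-hand side also retroactively justifies the application of Lemma \ref{lem3} to $|v|^q$, since $L^q$ is controlled on the bounded domains.

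On each ball $B_b$, which is disjoint from $D$, the metric $\omega$ is quasi-isometric to $g_0$ with uniform constants (finitely many balls) and $\rho$ is bounded above and below there (continuous and $\ge1$ on the compact set $\overline{B_b}$), so the ordinary Sobolev inequality on $B_b$ gives $\big(\int_{B_b}|v|^q\rho\,dV\big)^{1/q}\le C\big(\int_{B_b}(|v|^p+|\nabla v|^p)\rho\,dV\big)^{1/p}$ directly. Let $N_0$ be the maximal overlap multiplicity of the finite cover $\{U_a\setminus D\}_a\cup\{B_b\}_b$ of $M$. Since the integrands are nonnegative,
\[
\int_M|v|^q\rho\,dV\;\le\;\sum_{a,b}\int_{(\cdot)}|v|^q\rho\,dV\;\le\;C\sum_{a,b}\Big(\int_{(\cdot)}(|v|^p+|\nabla v|^p)\rho\,dV\Big)^{q/p}\;\le\;C\Big(N_0\int_M(|v|^p+|\nabla v|^p)\rho\,dV\Big)^{q/p},
\]
again using $\sum a^{q/p}\le(\sum a)^{q/p}$ and finiteness of the cover. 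Taking $q$-th roots proves the lemma.

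The substantive analytic content — the uniform-in-$\delta$ comparison between the $\rho$-weighted integral over $U\setminus D$ and the unweighted Euclidean integrals over the fixed polydiscs — is precisely Lemma \ref{lem3}, which is already in hand, so the main difficulty has been dealt with there. What remains to watch in the present argument is bookkeeping: that the Euclidean Sobolev constant is genuinely independent of the parameters $\delta_\ell$ (true, since $\tfrac{1}{2}\mathcal{P}_k\subset\tfrac{3}{4}\mathcal{P}_k$ are fixed and all $\delta$-dependence has been absorbed into the weights $A_{\delta_\ell}$, which are mutually bounded with $\rho^{-1}$, via Lemma \ref{lem3}) and can be taken uniform over the admissible range of $q$; that every link of the inequality chain is oriented correctly; and that the elementary power inequality $\sum_\ell a_\ell^{q/p}\le(\sum_\ell a_\ell)^{q/p}$ is exactly the direction we need ($q/p\ge1$). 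I expect this exponent bookkeeping, together with confirming the $W^{1,p}_{loc}$ regularity needed to invoke Lemma \ref{lem3} for the various choices of $f$, to be the only delicate points.
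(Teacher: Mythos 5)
Your proof follows the paper's argument essentially verbatim: both use the right-hand inequality of Lemma \ref{lem3} with $f=|v|^q$, the Euclidean Sobolev embedding on the fixed polydisc $\frac{3}{4}\mathcal{P}_k$, the elementary estimate $\sum_\ell a_\ell^{q/p}\le(\sum_\ell a_\ell)^{q/p}$ (valid since $q\ge p$), the left-hand inequality of Lemma \ref{lem3} with $f=|v|^p+|\nabla v|^p$, and then a finite cover by polydiscs near $D$ and unit balls away from $D$. Your version merely makes explicit a few points the paper leaves implicit (the $\delta$-independent quasi-isometry bound comparing $\nabla_{g_0}$ to $\nabla_g$, the finite-overlap multiplicity $N_0$ in the final gluing step, and the integrability needed to invoke Lemma \ref{lem3}), so the two proofs are the same in substance.
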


\begin{proof}
Away from the divisor, $\rho$ is bounded, then it is just the usual Sobolev inequality in open domains of $\mathbb{C}^n$. Hence, we only need to look at what happens near the divisor. Suppose a point on a crossing of codimension $k$. Consider a small polydisc $U$ around this point and cover $U\backslash D$ by a union
$\cup_{\delta} \Phi_{\delta}(\frac{3}{4}\mathcal{P}_k)$
where $\delta=(\delta_1,\ldots,\delta_k)$. We can assume that the metric on $U\backslash D$ is just the standard model metric. Then all the pullback ${\Phi_\delta}^* g$ give the same metric on $\frac{3}{4}\mathcal{P}_k$, , which is quasi-equivalent to the Euclidean metric on $\mathbb{C}^n$. On $\frac{3}{4}\mathcal{P}_k$ we have the standard Sobolev inequality
\begin{equation}
||f||_{L^q(\frac{3}{4}\mathcal{P}_k)} \leq C(n,p) ||f||_{W^{1,p}(\frac{3}{4}\mathcal{P}_k)}
\end{equation}
for any $q$ with $\frac{1}{p}\leq \frac{1}{q}+\frac{1}{2n}$. Then by Lemma \ref{lem3}, we can take a sequence $(\delta_\ell)$ and a positive constant $c$ depending on $(U,n,\omega)$, so that
\begin{gather}
\begin{aligned}
\int_{U\backslash D} |v|^q\rho dV 
\leq&\ c \sum_{\ell} ||{\Phi_{\delta_\ell}}^* v||^q_{L^q(\frac{1}{2}\mathcal{P}_k)}\\
\leq&\ cC(n,p)^q\sum_{\ell} ||{\Phi_{\delta_\ell}}^* v||^q_{W^{1,p}(\frac{3}{4}\mathcal{P}_k)}\\
\leq&\ cC(n,p)^q \left(\sum_{\ell} ||{\Phi_{\delta_\ell}}^* v||_{W^{1,p}(\frac{3}{4}\mathcal{P}_k)}^p\right)^{q/p} \text{ since $q\geq p$}\\
\leq&\ c^2C(n,p)^q \left(\int_{U\backslash D} (|v|^p+|\nabla v|^p) \rho dV\right)^{q/p}.
\end{aligned}
\end{gather}
Therefore, 
\begin{equation}
\begin{aligned}
\left(\int_{U\backslash D} |v|^q \rho dV\right)^{1/q} \leq&\ c^{2/q}C(n,p) \left(\int_{U\backslash D} (|v|^p+|\nabla v|^p) \rho dV\right)^{1/p}\\
\leq&\ c^{1/p}C(n,p) \left(\int_{U\backslash D} (|v|^p+|\nabla v|^p) \rho dV\right)^{1/p}.
\end{aligned}
\end{equation}

We can cover $D$ by finitely many such $U$'s and cover the complement of the union of these $U$'s by finitely many unit balls in $\mathbb{C}^n$. Take the constant $C$ to be the maximum of Sobolev constant in each $U$ and each unit ball. Then, the constant can be made only depend on $(p,n,\omega)$. 
\end{proof}

As a corollary of Lemma \ref{sob_inequ}, we show that function $F$ with $I(F,p_0)<\infty$ for some $p_0>2n$ are bounded.

\begin{corollary}\label{cor2.3.1}
Suppose $F$ satisfies $I(F,p_0)<\infty$ for some $p_0> 2n$. Then $||F||_{L^\infty}\leq C(I(F,p_0),p_0,n,\omega)$.
\end{corollary}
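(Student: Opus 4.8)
The plan is to derive the $L^\infty$ bound on $F$ directly from the weighted Sobolev inequality of Lemma \ref{sob_inequ}, using a Moser-type iteration on powers of $|F|$. The key point is that the hypothesis $I(F,p_0)<\infty$ for $p_0>2n$ is exactly the statement that $F$ and $\nabla F$ lie in $L^{p_0}$ with respect to the weighted measure $d\mu := \rho^{\frac{p_0-2}{2n-2}}\,dV$, and this weight is comparable to $\rho$ only in a mild sense; so first I would record that the relevant weighted Sobolev inequality should be applied with the weight $w=\rho^{\frac{p_0-2}{2n-2}}$ rather than $\rho$ itself. Concretely, one checks that Lemma \ref{sob_inequ} goes through verbatim with $\rho$ replaced by any power $\rho^a$ with $a\ge 1$ (the estimate in Lemma \ref{lem3} only used that ${\Phi_\delta}^*\rho$ is mutually bounded with $A_\delta^{-1}$, and the same holds for $\rho^a$ with $A_\delta^{-a}$, while the Euclidean Sobolev inequality on $\frac34\mathcal{P}_k$ is unaffected); alternatively, since $\frac{p_0-2}{2n-2}\ge 1$ for $p_0\ge 2n$, one can absorb the extra power as part of the functions being estimated. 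I would state this as a brief preliminary observation.

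Next I would set up the iteration. Write $p=p_0$ and let $q$ be the Sobolev exponent with $\frac1p=\frac1{2n}+\frac1q$, so that $\gamma:=q/p>1$. For $\beta\ge 1$ apply the weighted Sobolev inequality to $v=|F|^{\beta}$ (first to a smooth truncation of $F$, or using that $F\in W^{1,p_0}_{loc}$ and a cutoff argument, to justify $v\in W^{1,p}_{loc}$), getting
\[
\Big(\int_M |F|^{\beta q}\,w\,dV\Big)^{1/q} \le C\Big(\int_M \big(|F|^{\beta p}+\beta^p|F|^{(\beta-1)p}|\nabla F|^p\big)\,w\,dV\Big)^{1/p}.
\]
Then I would estimate the gradient term by Hölder's inequality with exponents $\tfrac{p_0}{p_0-p}$ and... wait — since here $p=p_0$, the gradient term is already $|F|^{(\beta-1)p_0}|\nabla F|^{p_0}$; apply Hölder with the pair $\big(\tfrac{p_0}{p_0-1}\cdot\text{(something)}\big)$ — more cleanly, use Hölder to split $\int |F|^{(\beta-1)p_0}|\nabla F|^{p_0} w \le \big(\int |\nabla F|^{p_0}\rho^{\frac{p_0-2}{2n-2}}\big)^{?}\cdots$; the honest route is to note $w$ here IS $\rho^{\frac{p_0-2}{2n-2}}$ so the gradient integral is bounded by $I(F,p_0)$ times $\|F\|_{L^\infty}^{(\beta-1)p_0}$ once we know $F$ is bounded — which is circular, so instead one carries $M_\beta:=\big(\int|F|^{\beta p_0}w\big)^{1/(\beta p_0)}$ through the iteration and uses Hölder to bound the gradient contribution by $\beta\,I(F,p_0)^{1/p_0}\,M_{\beta-1}^{(\beta-1)}\cdot(\text{a fixed power})$, giving a recursion $M_{\gamma\beta}\le (C\beta)^{1/\beta}\max(M_\beta,M_{\beta-1},1)$ along the sequence $\beta_i=\gamma^i$. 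Iterating and using $\sum \gamma^{-i}\log(C\gamma^i)<\infty$ yields $\|F\|_{L^\infty}\le C\,\max(1, I(F,p_0)^{1/p_0})$ with $C=C(p_0,n,\omega)$.

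The main obstacle I anticipate is bookkeeping the weight correctly: the measure $w\,dV=\rho^{\frac{p_0-2}{2n-2}}\,dV$ is infinite, so one cannot simply start the iteration from an $L^{p_0}(w)$ bound and expect the zeroth term to be controlled by a finite-measure trick, and the constants in the Sobolev inequality must be shown independent of the (non-integrable) weight — this is precisely where Lemma \ref{lem3}'s localized form is essential, since it reduces everything to Euclidean balls where the weight is comparable to a fixed power of $A_{\delta_\ell}^{-1}$ that is absorbed into the $\ell$-summation. A secondary technical point is justifying that $v=|F|^\beta\in W^{1,p}_{loc}$ and that integration against the weight is legitimate; since we ultimately only apply this to $F\in C^\infty_c(M)$ in the main argument (Corollary \ref{cor2.3.1} is invoked for such $F$), I would first prove it for $F\in C^\infty_c(M)$ — where all integrals are plainly finite and the iteration is clean — and then obtain the general case by the approximation Lemma \ref{app_lem}, noting that $\|F\|_{L^\infty}$ is controlled by $\liminf$ of the approximants' $L^\infty$ norms, which are uniformly bounded by $C\,\max(1,I(F,p_0)^{1/p_0})$.
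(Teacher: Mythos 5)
Your proposal takes a genuinely different route --- a Moser-type iteration --- but the route is both unnecessary and set up incorrectly from the first step. You define $q$ by $\tfrac{1}{p_0}=\tfrac{1}{2n}+\tfrac{1}{q}$ and take $\gamma=q/p_0>1$, but for $p_0>2n$ this relation forces $\tfrac{1}{q}<0$: there is no finite Sobolev conjugate in this range. When $p_0>2n$ one is already in the Morrey regime, $W^{1,p_0}\hookrightarrow L^\infty$, and no bootstrapping is needed. The paper's proof is exactly this one-shot argument. Because $\tfrac{1}{p_0}<\tfrac{1}{2n}$, the constraint $\tfrac{1}{p_0}\leq\tfrac{1}{2n}+\tfrac{1}{q}$ in Lemma \ref{sob_inequ} is satisfied for \emph{every} $q\geq p_0$, with a constant $C(p_0,n,\omega)$ independent of $q$. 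Applying Lemma \ref{sob_inequ} once with $v=F$ and $p=p_0$ gives
\[
\Big(\int_M |F|^q\,\rho\,dV\Big)^{1/q}\leq C\Big(\int_M(|F|^{p_0}+|\nabla F|^{p_0})\,\rho\,dV\Big)^{1/p_0}
\]
uniformly in $q$; since $\rho\geq1$ and $\mathrm{Vol}(M)<\infty$, we have $\|F\|_{L^q(dV)}\leq\|F\|_{L^q(\rho\,dV)}$ and $\|F\|_{L^q(dV)}\to\|F\|_{L^\infty}$ as $q\to\infty$, which is the conclusion.

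You also spend effort contemplating a version of Lemma \ref{sob_inequ} with the weight $\rho^{(p_0-2)/(2n-2)}$ in place of $\rho$; this is not needed. Since $p_0>2n$ we have $\tfrac{p_0-2}{2n-2}>1$, and since $\rho\geq1$ it follows that $\rho^{(p_0-2)/(2n-2)}\geq\rho$, hence $\int_M(|F|^{p_0}+|\nabla F|^{p_0})\,\rho\,dV\leq I(F,p_0)<\infty$. The hypothesis $I(F,p_0)<\infty$ is simply a stronger assumption than the right-hand side of Lemma \ref{sob_inequ} requires, and it is used as such. Beyond this, the iteration you sketch has internal difficulties you partially flag yourself: the recursion $M_{\gamma\beta}\leq(C\beta)^{1/\beta}\max(M_\beta,M_{\beta-1},1)$ carries a shifted index $M_{\beta-1}$ that does not sit on the geometric sequence $\beta_i=\gamma^i$ and so does not close cleanly, and the circularity you note around the gradient term would still need to be resolved. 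The cleaner fix is to notice that for $p_0>2n$ the iteration is superfluous.
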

\begin{proof}
Note that $\int (|F|^{p_0}+|\nabla F|^{p_0})\rho dV \leq I(F,p_0) <\infty$ since $p_0> 2n$ and $\rho\geq 1$. Moreover, we have $1/p_0 \leq 1/q + 1/2n$ for any $q\geq p_0$. The lemma follows by taking $v=F$ in Lemma \ref{sob_inequ} then letting $q\rightarrow \infty$.
\end{proof}

\section{Uniform $C^0$ estimate}
In this section we prove the uniform $C^0$ estimate for $\phi_\epsilon$. %The readers are reminded that this part is due to Auvray \cite{AUV};Proposition 4.6 in a different context. For the sake of completeness we will give a brief proof. The main ideas are from there and the proof is just slightly modified.

\begin{proposition}\label{prop}
Suppose $F\in C^\infty_c(M)$ satisfies $I(F,p_0)<\infty$. Let $\phi_\epsilon$ be the solution for equation (\ref{equ1}). Then there exists a constant $C = C(I(F,p_0),p_0,n,\omega)$ such that for all $\epsilon\in (0,1]$, 
$$||\phi_\epsilon||_{L^\infty}\leq C.$$
\end{proposition}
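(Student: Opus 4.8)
# Proof Proposal for Proposition 3.1 (Uniform $C^0$ Estimate)

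The plan is to reproduce Auvray's integral argument for the $C^0$ estimate \cite{AUV}, which adapts Yau's classical estimate to the Poincar\'e-type setting, modified here to accommodate both the extra term $\epsilon\phi_\epsilon$ in the equation and the weak regularity of $F$. A direct maximum-principle argument will not suffice: at an interior extremum of $\phi_\epsilon$ it only yields $\|\phi_\epsilon\|_{L^\infty}\lesssim\epsilon^{-1}\|F\|_{L^\infty}$, which degenerates as $\epsilon\to 0$, whereas we need a bound independent of $\epsilon$.

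First I would record the ingredients that are already available. By Corollary \ref{cor2.3.1}, $\|F\|_{L^\infty}\le C_0:=C(I(F,p_0),p_0,n,\omega)$. Since $F\in C_c^\infty(M)$, the solution $\phi_\epsilon$ produced by the Tian--Yau/Kobayashi continuity method lies in some $C^{k,\alpha}_{qc}(M)$ and is in particular bounded together with its first derivatives, with an a priori bound that may depend on $\epsilon$; this is harmless, since its only role is to make the differential forms appearing below lie in $L^1$, so that by completeness of $(M,\omega)$ and the Gaffney--Stokes theorem every integration by parts over $M$ produces \emph{no} boundary term. In particular $\int_M\omega_{\phi_\epsilon}^n=\int_M\omega^n$, hence $\int_M e^{F+\epsilon\phi_\epsilon}\,dV=\operatorname{Vol}(M,\omega)$.

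The core step is to run the $L^2\to L^\infty$ iteration. Writing the equation as $\omega^n-\omega_{\phi_\epsilon}^n=(1-e^{F+\epsilon\phi_\epsilon})\,\omega^n$, I would pair it for each $p\ge 2$ with $\phi_\epsilon|\phi_\epsilon|^{p-2}\rho$ and integrate by parts; the standard manipulation gives on the left a positive multiple of $\int_M\big|\nabla(|\phi_\epsilon|^{p/2})\big|^2\rho\,dV$ (the terms carrying $\partial\rho$ being absorbed via $|\nabla\rho|\le B\rho$, proved in the preliminaries), and on the right $\int_M\phi_\epsilon|\phi_\epsilon|^{p-2}\rho\,(1-e^{F+\epsilon\phi_\epsilon})\,dV$. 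Splitting $1-e^{F+\epsilon\phi_\epsilon}=(1-e^{\epsilon\phi_\epsilon})+e^{\epsilon\phi_\epsilon}(1-e^F)$ and using the elementary inequality $t(1-e^{\epsilon t})\le 0$ (valid for $\epsilon>0$), the first piece is pointwise $\le 0$ and is discarded, while the second is supported in $\operatorname{supp}F$, where $\rho$ and the geometry are bounded and the identity $\int_M e^{F+\epsilon\phi_\epsilon}\,dV=\operatorname{Vol}(M,\omega)$ controls the remaining quantities. Feeding the resulting estimate into the weighted Sobolev inequality of Lemma \ref{sob_inequ} (with $p=2$, $q=\tfrac{2n}{n-1}$) upgrades $L^2(\rho\,dV)$-integrability to $L^{2n/(n-1)}(\rho\,dV)$, and iterating over $p=2,\ \tfrac{2n}{n-1}\cdot 2,\dots$ in the usual Moser fashion yields $\|\phi_\epsilon\|_{L^\infty}\le C$.

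I expect the main obstacle to be exactly the point flagged in the introduction: the weighted measure $\rho\,dV$ has infinite total mass, so the lower-order terms produced at each iteration step cannot simply be dropped or absorbed as in the compact case, and one must observe that a single higher-order term coming out of the integration by parts dominates the iteration constants, so that their product still converges. A complementary, more elementary device --- needed to keep the final constant independent of $\epsilon$ and of $\operatorname{supp}F$ --- is the barrier $\phi_\epsilon\mp\eta\log\rho$: since $\ddbar\log\rho=\omega-\lambda\omega_0$ and $\log\rho\to+\infty$ along $D$, the maximum principle applied to these barriers followed by $\eta\to 0$ reduces the global $\sup$ and $\inf$ of $\phi_\epsilon$ to its $\sup$ and $\inf$ over a fixed compact set containing $\operatorname{supp}F$, on which the integral estimate above runs over tame geometry.
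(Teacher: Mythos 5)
Your overall strategy (an integral Moser iteration via the weighted Sobolev inequality, with the $\epsilon\phi_\epsilon$ contribution discarded by the sign observation $t(1-e^{\epsilon t})\le 0$) matches the paper's, which follows Auvray closely: the paper imports the $L^2$ bound from \cite{AUV}, Prop.\ 4.1, the iteration inequality $\int\big|\nabla|\phi|^{p/2}\big|^2\,dV\le \frac{p^2}{4(p-1)}\int|\phi|^{p-2}\phi(1-e^F)\,dV$ from Prop.\ 4.2, and the Poincar\'e-with-mean inequality from Lemma 1.10, then applies Lemma \ref{sob_inequ} and a H\"older step using $I(F,p_0)$.

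There is, however, a substantive gap in the version you propose. You pair the equation against $\phi|\phi|^{p-2}\rho$ and iterate in $L^q(\rho\,dV)$, but $\rho\,dV$ has \emph{infinite} total mass (the whole point of the weight $\rho^{(p_0-2)/(2n-2)}$ in $I(F,p_0)$ is to compensate for this). Moser iteration in an infinite measure does not upgrade to $L^\infty$: the norms $\|\,\cdot\,\|_{L^q(\rho\,dV)}$ do not converge to the sup norm as $q\to\infty$, and indeed even $\|1\|_{L^q(\rho\,dV)}=\infty$. The paper avoids this by applying Lemma \ref{sob_inequ} not to $|\phi|^{p/2}$ but to $|\phi|^{p/2}\rho^{-1/2}$, which trades the weight so that each iteration step lives in $L^q(d\mu)$ with $d\mu=\rho^{-1/(2n-1)}dV$, a \emph{finite} measure (as $\rho\ge1$ and $\operatorname{Vol}(M)<\infty$). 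Relatedly, you omit the Poincar\'e inequality (\cite{AUV}, Lemma 1.10) needed to reabsorb the zeroth-order term $\int|\phi|^p\,dV$ produced by the Sobolev inequality, and you do not anchor the iteration with the $L^2$ base case; both are essential ingredients of the paper's proof.

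Finally, the closing barrier step with $\phi_\epsilon\mp\eta\log\rho$ is not in the paper and, as formulated, does not do what you claim. At a maximum of $\phi_\epsilon-\eta\log\rho$ one gets $\omega_{\phi_\epsilon}\le(1-\eta)\omega+\eta\lambda\omega_0\le(1+C\eta)\omega$ (note also that the correct identity is $\ddbar\log\rho=\lambda\omega_0-\omega$, the opposite sign from what you wrote), which via the equation yields only $\epsilon\,\phi_\epsilon(x_0)\le n\log(1+C\eta)+\|F\|_\infty$ --- a bound that still degenerates like $\epsilon^{-1}$, the very dependence you correctly identified at the start as unacceptable. Moreover, since $\log\rho$ is unbounded, the inequality $\phi_\epsilon(x)\le\phi_\epsilon(x_0)+\eta\log\rho(x)$ does not bound $\sup_M\phi_\epsilon$ by $\sup_K\phi_\epsilon$ over a fixed compact $K$; sending $\eta\to0$ lets the touching point escape. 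The entire reduction must come from the weighted integral argument, not from a barrier.
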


\remark This result is proved in \cite{AUV} where the constant $C$ depends on $||F||_{C^0_\nu}$, $\nu$, $n$, and $\omega$. Here $\nu>0$ and $||F||_{C^0_\nu}:= \sup_M \rho^\nu |F|$. In our case the data of $F$ is to some extent weaker. It only depends on the integral bound $I(F,p_0)$. The proof is just a slight modification in dealing with a term containing $F$ in a H\"{o}lder inequality. 

\begin{proof}
For simiplicity of notations, we will drop the subscript $\epsilon$ from $\phi_\epsilon$. The integrals without subscript is taking on $M$ for granted.
First of all, Proposition 4.1 in \cite{AUV} holds, namely, 
$||\phi||_{L^2}\leq C=C(||F||_{L^\infty},p_0,n,\omega))$. Note that by Corollary \ref{cor2.3.1}, $||F||_{L^\infty} \leq C(I(F,p_0),p_0,n,\omega)$.
By Proposition 4.2 of \cite{AUV},
\begin{equation}\label{equ26}
\int \big|\nabla |\phi|^{p/2}\big|^2 dV \leq \frac{p^2}{4(p-1)}\int |\phi|^{p-2}\phi(1-e^F)dV.
\end{equation}
An easy computation yields
\begin{equation}\label{equ27}
\int \big|\nabla (|\phi|^{p/2}\rho^{-1/2})\big|^2 \rho dV \leq 2\int \big|\nabla |\phi|^{p/2} \big|^2 dV + 2\int |\phi|^p \left(\frac{|\nabla \rho|}{\rho}\right)^2dV.
\end{equation}
Applying wighted Sobolev inequality to $|\phi|^{p/2}\rho^{-1/2}$, we obtain
\begin{equation}\label{equ28}
\int |\phi|^{{2pn}/{(2n-1)}} \rho^{-{1}/{(2n-1)}} dV \leq C \left( \int \big|\nabla (|\phi|^{p/2}\rho^{-1/2})\big|^2 \rho dV + \int |\phi|^p dV \right).
\end{equation}
Applying a version of Poincar\'{e} inequality developed in \cite{AUV}, Lemma 1.10 (with a mean term) to $|\phi|^{p/2}$, we have
\begin{equation}\label{equ29}
\int |\phi|^p dV \leq C\int \big|\nabla |\phi|^{p/2}\big|^2 dV + Vol(M)^{-1} \left( \int |\phi|^{p/2} dV \right)^2
\end{equation}
Note that $\sup \frac{|\nabla \rho|}{\rho} \leq C$. Collect (\ref{equ26}), (\ref{equ27}), (\ref{equ28}) and (\ref{equ29}), we get
\begin{equation}
\begin{aligned}
\int |\phi|^{{2pn}/{(2n-1)}} \rho^{-{1}/{(2n-1)}} dV 
\leq&\ C\int |\nabla |\phi|^{p/2}|^2 dV + C' \left(\int |\phi|^{p/2}dV\right)^2\\
\leq&\ Cp\int |\phi|^{p-1}|e^F-1|dV + C' \left(\int |\phi|^{p/2}dV\right)^2
\end{aligned}
\end{equation}
Let $d\mu$ denote the measure $\rho^{-1/(2n-1)}dV$, we can rewrite the above as
\begin{equation}
\int |\phi|^{{2pn}/{(2n-1)}} d\mu \leq C\int |\phi|^{p-1}|e^F-1|\rho^{1/(2n-1)}d\mu + C'\left(\int |\phi|^{p/2}\rho^{1/(2n-1)}d\mu \right)^2
\end{equation}
We have $|e^F-1| \leq C|F|$, where $C$ depends on $||F||_{L^\infty}$. Let $q_0>0$ such that $1/p_0+1/q_0 =1$. By H\"{o}lder inequality,
\begin{equation}
\begin{aligned}
\int |\phi|^{p-1}|F|\rho^{1/(2n-1)}d\mu 
\leq&\ \left( \int |F|^{p_0}\rho^{p_0/(2n-1)}d\mu \right)^{1/p_0} \left(\int |\phi|^{(p-1)q_0}d\mu \right)^{1/q_0} \\
\leq&\ (I(F,p_0))^{1/p_0} \left(\int |\phi|^{(p-1)q_0}d\mu \right)^{1/q_0} \\
\leq&\ C ||\phi||_{L^{pq_0}(d\mu)}^p.
\end{aligned}
\end{equation}
Let $1/{p_1}+{1}/{(2q_1)}=1$ and ${n}/({2n-1})<q_1< {2n}/({2n-1})$, then $p_1<2n $. By H\"{o}lder inequality,
\begin{equation}
\begin{aligned}
\left(\int |\phi|^{p/2} \rho^{1/(2n-1)} d\mu \right)^2 
\leq&\  \left( \int \rho^{p_1/(2n-1)}d\mu \right)^{1/p_1} \left( \int |\phi|^{pq_1} d\mu \right)^{1/q_1}\\
\leq&\  C||\phi||_{L^{pq_1}(d\mu)}^p.
\end{aligned}
\end{equation}
Here $\int \rho^{p_1/(2n-1)}d\mu = \int \rho^{(p_1-1)/(2n-1)}dV < \infty$ since $p_1<2n$.
Hence,
\begin{equation}
||\phi||_{L^{p\frac{2n}{2n-1}}(d\mu)}^p \leq Cp ||\phi||^p_{L^{pq_0}(d\mu)} + C'||\phi||_{L^{pq_1}(d\mu)}^p
\end{equation}
with $q_0, q_1 < 2n/(2n-1)$.
Take $q_2= \max(q_0,q_1)$. Then $q_2< 2n/(2n-1)$ and
\begin{equation}
||\phi||_{L^{p\frac{2n}{2n-1}}(d\mu)} \leq C^{1/p}p^{1/p}||\phi||_{L^{pp_2}(d\mu)}.
\end{equation}
Hence, by standard iteration process we have
\[
||\phi||_{L^\infty}\leq C||\phi||_{L^2(d\mu)}\leq C||\phi||_{L^2(dV)}\leq C.
\]
\end{proof}

\section{Uniform $C^1$ estimate}
In this section we prove Theorem \ref{thm2}. For simiplicity of notations, we will drop the subscript $\epsilon$ from $\phi_\epsilon$. The constant $C$ may vary from line to line, but only depends on $I(F,p_0),\ p_0,\ \omega$ and $n$. 

We follow a computation in \cite{Chen-He}, section 3. We have the following inequality (see \cite{Chen-He}, equation (3.11))
\begin{equation}
\begin{aligned}
\Delta'\left(e^{-A(\phi)}|\nabla \phi|^2 \right)\geq &\ e^{-A(\phi)}|\nabla \phi|^2\left(-A''|\nabla'\phi|_\phi^2+(A'-B)tr_{g'}g\right)\\
&+(2A'-B)e^{-A(\phi)}|\nabla' \phi|_\phi^2  -\left((n+2)A'+2\epsilon\right)e^{-A(\phi)}|\nabla \phi|^2\\
&+ e^{-A(\phi)}(\Delta \phi -n + tr_{g'}g) - 2e^{A(\phi)}|\nabla F||\nabla \phi|.
\end{aligned}
\end{equation}
where $B>0$ is constant so that $\inf_{i\neq j} R_{i\bar{i}j\bar{j}} \geq -B$. Let $C_0$ be a fixed positive constant such that $C_0=1+ ||\phi||_{L^\infty}$. Choose
$$A(t)=(B+2)t -\frac{t^2}{2C_0}.$$
It follows that
$$
B+1 \leq A'(\phi)=B+2-\frac{\phi}{C_0}\leq B+3,\ A''(\phi)=-\frac{1}{C_0}.$$
It is easy to see that 
\begin{equation}\label{equ3}
tr_{g'}g \geq \exp(-F/(n-1))(tr_gg')^{1/(n-1)}.
\end{equation}
By (\ref{equ3}), we compute
\begin{equation}\label{equ15}
\begin{aligned}
&\ -A''|\nabla'\phi|_\phi^2 + (A'-B)tr_{g'}g\\
\geq &\ \frac{1}{C_0}|\nabla'\phi|_\phi^2 + \exp(-F/(n-1))(tr_gg')^{1/(n-1)}\\
\geq &\ n \left( \frac{\exp(-F)}{C_0(n-1)^{n-1}}|\nabla'\phi|_\phi^2 (tr_gg')\right)^\frac{1}{n}\\
\geq &\ C_1|\nabla \phi|^{2/n}
\end{aligned}
\end{equation}
for some $C_1$ depending on $||F||_{L^\infty}, ||\phi||_{L^\infty}$ and $n$.
Note that $n+\Delta \phi = tr_gg'$. By dropping the nonnegative terms $(2A'-B)e^{-A(\phi)}|\nabla' \phi|_\phi^2$ and $e^{-A(\phi)}tr_{g'}g$, and taking (\ref{equ15}) into account, we have
\begin{equation}
\begin{aligned}
\Delta'\left(e^{-A(\phi)}|\nabla \phi|^2 \right)\geq &\ C_1 e^{-A(\phi)}|\nabla \phi|^{2+\frac{2}{n}}
-\{(n+2)A'+2\}e^{-A(\phi)}|\nabla \phi|^2\\
&\ + e^{-A(\phi)}(tr_gg' -2n) - 2e^{-A(\phi)}|\nabla F||\nabla \phi|.
\end{aligned}
\end{equation}
We can intepolate $|\nabla \phi|^2$ by $|\nabla \phi|^{2+2/n}$ and constants, i.e.,
\begin{equation}
|\nabla \phi|^2 \leq \varepsilon |\nabla \phi|^{2+\frac{2}{n}} + C(\varepsilon).
\end{equation}
Let $u= \exp(-A(\phi))|\nabla \phi|^2$, then
\begin{equation}\label{equ4}
\Delta' u \geq (C_1u^{1+1/n}-C_2) + C_3tr_gg' -C_4 |\nabla F|u^{1/2}
\end{equation}
Now multiplying (\ref{equ4}) by $u^p,\ p>0$ and integration by parts,
\begin{equation}
\begin{aligned}
&\ -\int_M pu^{p-1}|\nabla' u|^2_\phi dV' + \int_M \nabla'(u^p\nabla' u) dV'\\
=&\ \int_M u^p\left(C_1 u^{1+\frac{1}{n}}-C_2\right) dV' + C_3 \int_M u^p(tr_gg')dV' - C_4\int_M |\nabla F|u^{p+\frac{1}{2}} dV'
\end{aligned}
\end{equation}
By Gaffney-Stokes, $\int_M \nabla'(u^p\nabla' u) dV'=0$. Hence,
\begin{equation}
\begin{aligned}
&\int_M pu^{p-1}|\nabla' u|^2_\phi + C_3u^p(tr_gg') dV' \\
\leq &\ C_4\int_M |\nabla F|u^{p+\frac{1}{2}} dV'-\int_M u^p(C_1 u^{1+\frac{1}{n}}-C_2)dV'
\end{aligned} 
\end{equation}
Note that pointwisely we have
\begin{equation}
|\nabla' u|^2_\phi + (tr_gg') \geq 2|\nabla u|.
\end{equation}
Hence,
\begin{equation}
\int_M 2\sqrt{C_3p}u^{p-\frac{1}{2}}|\nabla u|dV' \leq C_4\int_M |\nabla F|u^{p+\frac{1}{2}} dV' -\int_M u^p(C_1u^{1+\frac{1}{n}}-C_2)dV_\phi.
\end{equation}
Note that $dV$ and $dV'$ are equivalent, hence
\begin{equation}
\int_M |\nabla u^{p+\frac{1}{2}}| dV \leq C_5\sqrt{p}\int_M |\nabla F|u^{p+\frac{1}{2}}dV -  C_6\sqrt{p}\int_M u^p(u^{1+\frac{1}{n}}-C_7)dV
\end{equation}
Let us rewrite it as follows:
\begin{equation}\label{equ6}
\int_M |\nabla u^{p}| dV \leq C_5\sqrt{p}\int_M |\nabla F|u^{p}dV - C_6\sqrt{p}\int_M u^{p-\frac{1}{2}}(u^{1+\frac{1}{n}}-C_7)dV
\end{equation}
To get the $L^\infty$ bound, we use the iteration method. First, we have
\begin{equation}\label{equ5}
\int_M |\nabla (u^p\rho^{-1})| \rho dV =\int_M |\nabla u^p|dV + \int_M u^p\left(\frac{|\nabla \rho|}{\rho}\right) dV
\end{equation}
Apply the weighted Sobolev inequality (\ref{sob_inequ}) to function $u^p\rho^{-1}$, we have
\begin{equation}
\left(\int_M (u^p\rho^{-1})^{\frac{2n}{2n-1}}\rho dV\right)^{\frac{2n-1}{2n}} \leq \int_M |\nabla (u^p\rho^{-1})|\rho dV + \int_M (u^p\rho^{-1})\rho dV
\end{equation}
Taking (\ref{equ5}) into account, we have
\begin{equation}
\begin{aligned}
\left(\int_M u^{p\frac{2n}{2n-1}} \rho^{-\frac{1}{2n-1}}dV\right)^{\frac{2n-1}{2n}} \leq&\ \int_M |\nabla u^p|dV +\int_M u^p\left(1+\frac{|\nabla \rho|}{\rho}\right) dV\\
\leq&\ \int_M |\nabla u^p|dV +C_8\int_M u^p dV
\end{aligned}
\end{equation}
Taking (\ref{equ6}) into account, we have
\begin{equation}\label{equ17}
\begin{aligned}
&\left(\int_M u^{p\frac{2n}{2n-1}} \rho^{-\frac{1}{2n-1}}dV\right)^{\frac{2n-1}{2n}} \\
\leq &\ C_5\sqrt{p}\int_M u^{p}|\nabla F| dV 
- C_6\sqrt{p}\int_M u^{p-\frac{1}{2}}\left(u^{1+\frac{1}{n}} - C_8\sqrt{u}-C_7\right)dV \\
\leq &\  C_5\sqrt{p}\int_M u^{p}|\nabla F| dV 
- C_9\sqrt{p}\int_M u^{p-\frac{1}{2}}\left(u^{1+\frac{1}{n}} -C_{10}\right)dV
\end{aligned}
\end{equation}
There exists some constant $K$, such that when $u>K$, $u^{1+\frac{1}{n}} -C_{10}\geq 0$. Hence,
\begin{equation}\label{equ16}
\begin{aligned}
-\int_M u^{p-\frac{1}{2}}\left(u^{1+\frac{1}{n}} -C_{10}\right)dV
\leq&\ \int_{\{u\leq K\}} u^{p-\frac{1}{2}}\left(u^{1+\frac{1}{n}} -C_{10}\right)dV\\
\leq&\ C_{10}Vol(M)K^{p-1/2}
\end{aligned}
\end{equation}
Put (\ref{equ16}) into (\ref{equ17}) and let $d\mu = \rho^{-\frac{1}{2n-1}}dV$, then we have
\begin{equation}
\left(\int_M u^{p\frac{2n}{2n-1}}d\mu \right)^{\frac{2n-1}{2n}}
\leq C_5\sqrt{p}\left( \int_M u^p |\nabla F| \rho^{\frac{1}{2n-1}}d\mu  + C_{11}K^p\right)
\end{equation}
Now let $u=Kv$, we get inequality of $v$ that
\begin{equation}
\left(\int_M v^{p\frac{2n}{2n-1}}d\mu \right)^{\frac{2n-1}{2n}}
\leq C_{12}\sqrt{p}\left( \int_M v^p |\nabla F| \rho^{\frac{1}{2n-1}}d\mu  + 1\right)
\end{equation}
By H\"{o}lder inequality, we have
\begin{equation}
\int_M v^p|\nabla F|\rho^{\frac{1}{2n-1}}d\mu \leq \left(\int_M v^{pp_0} d\mu \right)^{1/p_0} 
\left(\int_M |\nabla F|^{p_0}\rho^{\frac{p_0}{2n-1}} d\mu\right)^{1/p_0}
\end{equation}
Note that 
$$\int_M |\nabla F|^{p_0} \rho^{\frac{p_0}{2n-1}}d\mu =\int_M |\nabla F|^{p_0}\rho^{\frac{p_0-1}{2n-1}}dV \leq C< \infty $$ 
and for $p_0 > 2n$, we have $p_0 < \frac{2n-1}{2n}$. Hence, 
%\begin{equation}
%\left(\int v^{p\frac{2n}{2n-1}}d\mu \right)^{\frac{2n-1}{2n}}
%\leq C_{12}\sqrt{p}\left( C\left(\int v^{pp_0} d\mu\right) ^{1/p_0}+1\right)
%\end{equation}
\begin{equation}
||v||_{L^{p\frac{2n}{2n-1}}_{d\mu}}^p  \leq C_{12}\sqrt{p} \left( C||v||_{L^{pp_0}_{d\mu}}^p+1\right)
\end{equation}
Let $\beta = \frac{2n}{2n-1}p_0^{-1}$, then $\beta > 1$. Take a sequence $(p_k)$ with 
$p_k=p_0^{-1}\beta^{k-1}$ for $k\geq 1$. Then
\begin{equation}
||v||_{L^{p_0p_{k+1}}_{d\mu}}^{p_k}  \leq C_{12}\sqrt{p_k} \left( C||v||_{L^{p_0p_k}_{d\mu}}^{p_k}+1\right)
\end{equation}
Let $A_k = \max\{||v||_{L^{p_0p_k}_{d\mu}}, 1\}$, then
\begin{equation}
A_{k+1}  \leq C'^{1/p_k} p_k^{2/p_k}A_k.
\end{equation}
Hence, by iteration, we have
\begin{equation}
A_k \leq \left(\prod_{j=1}^{k-1} C'^{1/p_j}p_j^{2/p_j}\right) A_1.
\end{equation}
It is easy to check that 
$$\sum_{j=1}^\infty \frac{1}{p_j}\left(2\log p_j+\log C'\right)
=\sum_{j=0}^\infty \frac{p_0}{\beta^{j}}\left(2j\log \beta-2\log p_0 + \log C'\right)< \infty.$$
Therefore, we have $||v||_{L^{p_0p_k}_{d\mu}}\leq C$ for some positive constant $C$.
Let $k\rightarrow \infty$, we have
\begin{equation}
||v||_{L^\infty} \leq C||v||_{L^1_{d\mu}}.
\end{equation}
Note that $u=Kv$, hence
\begin{equation}
||u||_{L^\infty} \leq C||u||_{L^1_{d\mu}}.
\end{equation}
Note that $d\mu\leq dV$, and that $\phi$ has uniform $L^\infty$ bound, we have
\begin{equation}
\begin{aligned}
||u||_{L^1_{d\mu}}=&\ \int_M \exp(-A(\phi))|\nabla \phi|^2d\mu
\leq C\int_M |\nabla \phi|^2dV \\
\leq&\ -C\int_M \phi\Delta\phi dV 
= -C \int_M \phi(\Delta \phi+n-n) dV \\
\leq&\ C||\phi||_{L^\infty}\int_M (\Delta\phi+n) dV + Cn\int_M \phi dV \leq C
\end{aligned}
\end{equation}
Hence, we obtain $||u||_{L^\infty} \leq C_1$, namely, $||\nabla v||_{L^\infty}\leq C$, where is constant $C$ depends on $||\phi||_{L^\infty},\ ||F||_{L^\infty}, I(F,p_0),\ p_0,\ g,\ n$. By Corollary \ref{cor2.3.1} and Proposition \ref{prop}, the constant $C$ only depends on $I(F,p_0),\ p_0,\ g,\ n$.

\section{Uniform $C^2$ estimate}
In this section we give the uniform Laplacian estimate of $\phi_\epsilon$, namely, we prove Theorem \ref{thm3}. For the simplicity of notations, we ommit the subscript $\epsilon$ from $\phi_\epsilon$. The constant $C$ may differ from line to line.

Let $w= e^{-A\phi}(tr_g g')$ with some constant $A\geq -\inf_{i\neq\ell} R_{i\bar{i}\ell\bar{\ell}}+1$. We first show the following inequality holds.

\begin{lemma}\label{lem2}
There exist positive constants $\theta$ and $C$ depending only on $||\phi||_{L^\infty}$  and $||F||_{L^\infty}$, such that 
\begin{equation}
\Delta' w \geq \theta w^{\frac{n}{n-1}}-C-e^{-A\phi}\Delta F
\end{equation}
\end{lemma}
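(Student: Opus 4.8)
The plan is to carry out the classical Aubin--Yau second-order computation, which is purely pointwise --- so the non-compactness of $M$ is irrelevant at this stage and enters only later, when the resulting differential inequality is integrated (exactly as in the $C^1$ estimate, which proceeds through $u=e^{-A\phi}|\nabla\phi|^2$). The ingredients are: the standard lower bound for $\Delta'\log(tr_gg')$ in terms of the bisectional curvature of $\omega$; the trace identity $\Delta'\phi=n-tr_{g'}g$; and the elementary inequality $tr_{g'}g\ge e^{-(F+\epsilon\phi)/(n-1)}(tr_gg')^{1/(n-1)}$ (the form of (\ref{equ3}) for the perturbed equation). Throughout I would freely use the uniform bounds $\|\phi\|_{L^\infty}\le C$ (Proposition \ref{prop}) and $\|F\|_{L^\infty}\le C$ (Corollary \ref{cor2.3.1}), together with the lower bound $tr_gg'=\sum_i\lambda_i\ge n\big(\prod_i\lambda_i\big)^{1/n}=n\,e^{(F+\epsilon\phi)/n}\ge c_0>0$ extracted from the equation (the $\lambda_i$ being the eigenvalues of $g'$ relative to $g$), all with constants independent of $\epsilon\in(0,1]$.

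First I would record the Aubin--Yau inequality for the perturbed equation $(\omega+\ddbar\phi)^n=e^{F+\epsilon\phi}\omega^n$ (see \cite{Yau1}; cf.\ \cite{Chen-He}): with $B_0:=-\inf_{i\neq\ell}R_{i\bar i\ell\bar\ell}>0$,
\[
\Delta'\log(tr_gg')\ \ge\ \frac{\Delta F+\epsilon\Delta\phi}{tr_gg'}\ -\ B_0\,tr_{g'}g .
\]
Since $\Delta\phi=tr_gg'-n$, the quantity $\epsilon\Delta\phi/tr_gg'=\epsilon-\epsilon n/tr_gg'$ lies in $[-n/c_0,\,1]$ and hence costs only a bounded error. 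Adding $-A\,\Delta'\phi=-An+A\,tr_{g'}g$ and using the hypothesis $A\ge B_0+1$ (precisely $A\ge-\inf_{i\neq\ell}R_{i\bar i\ell\bar\ell}+1$) to keep the coefficient of $tr_{g'}g$ at least $1$, one gets $\Delta'\log w\ge\Delta F/tr_gg'+tr_{g'}g-An-C$. Then, since $\Delta'w\ge w\,\Delta'\log w$ (the difference being a nonnegative multiple of $|\nabla'w|^2$) and $w/tr_gg'=e^{-A\phi}$,
\[
\Delta'w\ \ge\ e^{-A\phi}\Delta F\ +\ w\,tr_{g'}g\ -\ (An+C)\,w .
\]
(The statement writes the first term as $-e^{-A\phi}\Delta F$; the sign is immaterial because only $|\nabla F|$ enters the subsequent iteration, after one integration by parts, so I keep the form above.)

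It remains to turn $w\,tr_{g'}g$ into the $w^{n/(n-1)}$ term and to absorb the linear term. Substituting $tr_gg'=e^{A\phi}w$ into $tr_{g'}g\ge e^{-(F+\epsilon\phi)/(n-1)}(tr_gg')^{1/(n-1)}$ gives $tr_{g'}g\ge c_1\,w^{1/(n-1)}$ with $c_1=\exp\!\big(-\big(\|F\|_{L^\infty}+(1+A)\|\phi\|_{L^\infty}\big)/(n-1)\big)>0$, depending only on $\|F\|_{L^\infty},\|\phi\|_{L^\infty}$ (and, through $A$, on $n$ and $\omega$). Hence $w\,tr_{g'}g\ge c_1\,w^{n/(n-1)}$, while Young's inequality with conjugate exponents $\tfrac{n}{n-1}$ and $n$ gives $(An+C)\,w\le\tfrac{c_1}{2}\,w^{n/(n-1)}+C'$. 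Combining, $\Delta'w\ge\tfrac{c_1}{2}\,w^{n/(n-1)}-C'+e^{-A\phi}\Delta F$, which is the claim with $\theta=c_1/2$.

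I do not anticipate a real obstacle --- the inequality is local and classical --- so the only point needing genuine care is uniformity in $\epsilon$: I would check explicitly that the perturbation term $\epsilon\Delta\phi/tr_gg'$ is $\epsilon$-uniformly bounded (via $tr_gg'\ge c_0>0$) and that $B_0,A,c_0,c_1,C,C'$ are $\epsilon$-free, which follows from the $\epsilon$-uniform $L^\infty$ bounds on $\phi$ and $F$ already in hand. One should also note at the outset that $w>0$ everywhere (as $tr_gg'>0$), so that passing to $\log w$ and dividing by $w$ are legitimate.
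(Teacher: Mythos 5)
Your argument follows the paper's proof essentially step-for-step: Yau's second-order inequality for $\Delta'\log(tr_gg')$, passage to $\log(tr_gg')-A\phi$, the pointwise inequality $\Delta'w\geq w\,\Delta'\log w$, the trace inequality $tr_{g'}g\geq e^{-(F+\epsilon\phi)/(n-1)}(tr_gg')^{1/(n-1)}$ to generate the $w^{n/(n-1)}$ term, and Young's inequality to absorb the linear term in $w$. One sign should be corrected at the start: since $(\omega+\ddbar\phi)^n=e^{F+\epsilon\phi}\omega^n$ gives $Ric(\omega_\phi)=Ric(\omega)-\ddbar(F+\epsilon\phi)$, the trace term is $g^{j\bar{k}}Ric'_{j\bar{k}}=R-\Delta F-\epsilon\Delta\phi$, so the numerator in your first display should read $-(\Delta F+\epsilon\Delta\phi)$ rather than $+(\Delta F+\epsilon\Delta\phi)$ (and one may also keep the bounded $R/tr_gg'$ term); with that fix you recover the lemma's stated sign, which, as you note, is in any case immaterial for the subsequent Moser iteration.
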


\begin{proof}
We start with Yau's inequality
\begin{equation}
\begin{aligned}
\Delta' (tr_g g') \geq&\ (\inf_{i\neq \ell} R_{i\bar{i}\ell\bar{\ell}})(tr_g g')(tr_{g'} g) + g^{j\bar{k}}Ric'_{j\bar{k}}  + \frac{1}{tr_g g'}|\nabla' tr_g g'|_\phi^2 \\
=&\ (\inf_{i\neq \ell} R_{i\bar{i}\ell\bar{\ell}})(tr_g g')(tr_{g'} g) + (R-\Delta(F+\epsilon \phi))  + \frac{1}{tr_g g'}|\nabla' tr_g g'|_\phi^2
\end{aligned}
\end{equation}
Hence,
\begin{equation}
\Delta' \log(tr_g g') \geq (\inf_{i\neq \ell} R_{i\bar{i}\ell\bar{\ell}})(tr_{g'}g)+ \frac{R-\Delta F-\epsilon\Delta\phi}{tr_g g'}
\end{equation}
Note that $\Delta' \phi = 2n - tr_{g'}g$ and $\Delta \phi = tr_gg' -2n$,
\begin{equation}
\begin{aligned}
\Delta' \left(\log(tr_g g') - A\phi\right) \geq&\ (\inf_{i\neq \ell} R_{i\bar{i}\ell\bar{\ell}}+ A)(tr_{g'}g)-(2nA+\epsilon) + \frac{R-\Delta f+2\epsilon n }{tr_g g'}\\
\geq &\ tr_{g'}g - (2nA+\epsilon) + \frac{R+2\epsilon n}{tr_gg'} - \frac{\Delta F}{tr_gg'}
\end{aligned} 
\end{equation}
%Take $A$  so that $\inf_{i\neq \ell} R_{i\bar{i}\ell\bar{\ell}}+ A \geq 1$, and note that $tr_gg' = 2n + \Delta \phi$, hence
%\begin{equation}
%\begin{aligned}
%\Delta' \left(\log(tr_g g') - A\phi\right) \geq &\ tr_{g'}g+ \frac{R-\Delta f-\epsilon\Delta\phi}{tr_g g'}-2nA\\
%\geq &\ tr_{g'}g + \frac{R+2\epsilon n}{tr_gg'}-(2nA+\epsilon) -\frac{\Delta f}{tr_g g'}
%\end{aligned}
%\end{equation}
Let $\tilde{w}=\log(tr_g g')-A\phi$. Then $w=e^{-A\phi}(tr_gg') = \exp(\tilde{w})$. We have
\begin{equation}
\Delta' w = \Delta e^{\tilde{w}}
= e^{\tilde{w}} \Delta' \tilde{w} + e^{\tilde{w}}|\nabla' \tilde{w}|^2_\phi 
\geq  e^{\tilde{w}}\Delta' \tilde{w}
\end{equation}
Namely,
\begin{equation}\label{equ7}
\begin{aligned}
\Delta' w 
\geq &\ e^{-A\phi}(tr_gg') \Delta'(\log(tr_g g')-A\phi) \\
= &\ e^{-A\phi}\left[(tr_gg')(tr_{g'}g)-(2nA+\epsilon)(tr_gg')+(R+2\epsilon n)\right] - e^{-A\phi}\Delta F
\end{aligned}
\end{equation}
It is well known that 
\begin{equation}
tr_{g'}g \geq e^{-\frac{F}{n-1}}(tr_g g')^{\frac{1}{n-1}}
\end{equation}
Hence,
\begin{equation}
\begin{aligned}
&\ e^{-A\phi}\left[(tr_gg')(tr_{g'}g)-(2nA+\epsilon)(tr_gg')+(R+2\epsilon n)\right] \\
\geq\ &\ e^{-A\phi}\left[e^{-\frac{F}{n-1}}(tr_gg')^\frac{n}{n-1}-(2nA+\epsilon)(tr_gg')+(R+2\epsilon n)\right] \\
\geq\ &\ e^\frac{A\phi-f}{n-1}w^\frac{n}{n-1}-(2nA+\epsilon)w +(R+2\epsilon n)e^{-A\phi}\\
\geq\ &\ 2\theta w^{\frac{n}{n-1}}-(2nA+1)w -C_0
\end{aligned}
\end{equation}
where $\theta = \frac{1}{2}\exp(-\frac{A|\phi|+|F|}{n-1})$ and $C_0= (|R|+2n)e^{A|\phi|}$. On the other hand, we have 
\begin{equation}
(2nA+1)w\leq \theta w^\frac{n}{n-1}+C_\theta
\end{equation} 
Hence,
\begin{equation}\label{equ8}
e^{-A\phi}\left[(tr_gg')(tr_{g'}g)-(2nA+\epsilon)(tr_gg')+(R+2\epsilon n)\right]\geq \theta w^\frac{n}{n-1} - C_1
\end{equation}
where $C_1=C_0+C_\theta$. Combine (\ref{equ7}) and (\ref{equ8}), we have
\begin{equation}
\Delta' w \geq \theta w^\frac{n}{n-1} - C_1 - e^{-A\phi}\Delta F
\end{equation}
We finish the proof.
\end{proof}
\vspace{1em}

Now we start to prove Theorem \ref{thm3}.

\begin{proof}[Proof of theorem \ref{thm3}]
We compute, for $p>0$,

\begin{equation}\label{equ9}
\begin{aligned}
\int_M |\nabla w^p|^2dV 
\leq &\ \int_M (tr_gg')|\nabla' w^p|^2_\phi e^{-(F+\epsilon\phi)}dV' \\
= &\ \int_M e^{(A-\epsilon)\phi-F}w|\nabla' w^p|^2_\phi dV' \\
\leq &\ C_2\int_M w|\nabla' w^p|^2_\phi dV'
\end{aligned}
\end{equation}

where $C_2=\exp((A+1)|\phi|+|F|)$. Continue the computation, we have
\begin{equation}\label{equ10}
\begin{aligned}
\int_M w|\nabla' w^p|^2_\phi dV' &= \frac{p}{2} \int_M \nabla'(w^{2p}\nabla' w) dV' -\frac{p}{2}\int_M w^{2p}\Delta' w dV' \\
&= -\frac{p}{2} \int_M w^{2p}\Delta' w dV'
\end{aligned}
\end{equation}
The integral $\int \nabla'(w^{2p}\nabla' w)dV'$ in the right hand side vanishes because of Gaffney-Stokes. Combining (\ref{equ9}) and (\ref{equ10}), we have
\begin{equation}
\begin{aligned}
\int_M |\nabla w^p|^2 dV \leq &\ \frac{1}{2}C_2p\int w^{2p}(-\Delta' w) dV' \\
= &\ \frac{1}{2}C_2p\int_M w^{2p}(-\Delta' w) e^{F+\epsilon\phi}dV \\
\leq &\ C_3p\int_M w^{2p}(-\Delta' w)dV
\end{aligned}
\end{equation}
where $C_3=\frac{1}{2}C_2\exp(|F|+|\phi|)$.
By Lemma \ref{lem2}, we have
\begin{equation}
\begin{aligned}
\int_M |\nabla w^p|^2 dV 
\leq &\ C_3p\left\{-\int_M w^{2p}(\theta w^\frac{n}{n-1}-C_1) dV + \int_M w^{2p}e^{-A\phi}\Delta F dV\right\}\\
\leq &\ C_3p\left\{-\int_M w^{2p}(\theta w^\frac{n}{n-1}-C_1)dV + \int_M 2e^{-A\phi}w^p|\nabla w^p||\nabla F|dV \right.\\
&\qquad\quad \left. +\int_M Ae^{-A\phi}w^{2p}|\nabla \phi||\nabla F|dV\right\} \\
\leq &\ -C_3p\int_M w^{2p}(\theta w^\frac{n}{n-1}-C_1)dV + C_4p\int_M w^p|\nabla w^p||\nabla F|dV \\
&\qquad\quad + C_5p\int_M w^{2p}|\nabla F|dV \\
\end{aligned}
\end{equation}
where $C_4=2C_3\exp(A|\phi|)$ and $C_5=C_3A\exp(A|\phi|)|\nabla \phi|$.
\begin{comment}
\leq &\ -Cp\int w^{2p}(\theta w^\frac{n}{n-1}-C) dV + Cp\int w^{2p}e^{-A\phi}\Delta f dV\\
\leq &\ -Cp\int w^{2p}(\theta w^\frac{n}{n-1}-C) dV + Cp\int 2pe^{-A\phi}w^{2p-1}|\nabla w||\nabla f| dV\\ & + Cp\int Ae^{-A\phi}w^{2p}|\nabla \phi||\nabla f| dV + Cp\int \nabla(w^{2p}e^{-A\phi}\nabla f) dV\\
\leq &\ -Cp\int w^{2p}(\theta w^\frac{n}{n-1}-C) dV + Cp\int |\nabla w^p|w^p|\nabla f|dV \\
&\ + Cp\int w^{2p}|\nabla f|dV
\end{aligned}
\end{equation}
\end{comment}
By H\"{o}lder inequality,
\begin{gather}
C_4p\int_M |\nabla w^p|w^p|\nabla F|dV \leq \frac{1}{2}\int_M |\nabla w^p|^2 dV + \frac{1}{2}(C_4p)^2\int_M w^{2p}|\nabla F|^2 dV\\
\int_M w^{2p}|\nabla F|dV \leq \frac{1}{2}\int_M w^{2p}(|\nabla F|^2 +1)
\end{gather}
Hence,
\begin{equation}
\int_M |\nabla w^p|^2 dV \leq -2C_3p\int_M w^{2p}(\theta w^\frac{n}{n-1}-C_6)dV + (C_4^2p^2+C_5p)\int_M w^{2p}|\nabla F|^2 dV
\end{equation}
At this stage, we have
\begin{equation}\label{equ13}
\int_M |\nabla w^p|^2dV\leq -C_7p\int_M w^{2p}(\theta w^\frac{n}{n-1}-C_6)dV + C_8p^2\int_M w^{2p}|\nabla F|^2dV
\end{equation}
We compute 
\begin{equation}\label{equ11}
\int_M |\nabla (w^p\rho^{-\frac{1}{2}})|^2 \rho dV \leq \int_M |\nabla w^p|^2 dV +\int_M w^{2p}\left(\frac{|\nabla \rho|}{\rho}\right)^2 dV
\end{equation}
By the weighted Sobolev inequality (Lemma \ref{sob_inequ}), we have
\begin{equation}\label{equ12}
\left(\int_M (w^p\rho^{-\frac{1}{2}})^{\frac{4n}{2n-2}}\rho dV\right)^{\frac{2n-2}{4n}} \leq C_S\left(\int_M |\nabla (w^p\rho^{-\frac{1}{2}})|^2\rho dV + \int_M w^{2p} dV\right)^{\frac{1}{2}}
\end{equation}
Combining (\ref{equ11}) and (\ref{equ12}), we get
\begin{equation}
\begin{aligned}
\left(\int_M w^{2p\frac{n}{n-1}}\rho^{-\frac{1}{n-1}}dV\right)^\frac{n-1}{2n} 
\leq &\ C_S\left\{\int_M |\nabla  w^p|^2 dV +
\int_M w^{2p}\left(\left(\frac{|\nabla \rho|}{\rho}\right)^2+1\right)dV\right\}^\frac{1}{2}\\
\leq &\ C_S\left\{ \int_M |\nabla w^p|^2 dV + C_9\int w^{2p} dV \right\}^\frac{1}{2} 
\end{aligned}
\end{equation}
Combining with (\ref{equ13}), and let $d\nu=\rho^{-\frac{1}{n-1}}dV$, we have
\begin{equation}
\begin{aligned}
\left(\int_M w^{2p\frac{n}{n-1}}d\nu\right)^\frac{n-1}{n} 
\leq &\ {C_S}^2\left\{ -C_7p \int_M w^{2p}(\theta w^\frac{n}{n-1}-C_{10})dV \right.  \\
&\ + \left. C_8p^2\int_M w^{2p}|\nabla F|^2 \rho^{\frac{1}{n-1}}d\nu\right\}
\end{aligned}
\end{equation}
Let $K=(C_{10}\theta^{-1})^{\frac{n-1}{n}}$. Note that when $w> K$, $C_{10}-\theta w^\frac{n}{n-1}<0$. Hence,
\begin{equation}
\int_M w^{2p}(C_{10}-\theta w^\frac{n}{n-1})dV 
\leq \int_{\{w\leq K\}} C_{10}w^{2p} dV
\leq C_{10}K^{2p}Vol(M)
\end{equation}
On the other hand, by H\"{o}lder inequality, we have
\begin{equation}
\begin{aligned}
\int_M w^{2p}|\nabla F|\rho^\frac{1}{n-1}d\nu
\leq&\ \left(\int_M w^{2pq_0}d\nu\right)^\frac{1}{q_0} \left(\int_M |\nabla F|^{p_0}\rho^\frac{p_0}{2n-2}d\nu\right)^\frac{2}{p_0} \\
=&\ \left(\int_M w^{2pq_0}d\nu\right)^\frac{1}{q_0} \left(\int_M |\nabla F|^{p_0}\rho^\frac{p_0-2}{2n-2}dV\right)^\frac{2}{p_0}
\end{aligned}
\end{equation}
where $\frac{1}{q_0}+\frac{2}{p_0}=1$. Since $p_0>2n$, we have $q_0< \frac{n}{n-1}$.
Hence,
\begin{equation}
\left(\int_M w^{2p\frac{n}{n-1}} d\nu \right)^\frac{n-1}{n} \leq {C_S}^2 \left\{C_{11}qK^{2p}+ C_{12}p^2\left(\int_M w^{2pq_0}d\nu\right)^\frac{1}{q_0}\right\}
\end{equation}
Let $w=Kv$. Then we have
\begin{equation}
||v||_{L^{2p\frac{n}{n-1}}(d\nu)}^p \leq C_{13}p^2(||v||^p_{L^{2pq_0}(d\nu)}+1)
\end{equation}
\begin{comment}
Take a sequence $(p_k)$ with $p_k=\frac{1}{2q_0}(\frac{n}{n-1}q_0^{-1})^{k-1}$. Then we have
\begin{equation}
||v||^{p_k}_{L^{p_0p_{k+1}}(d\nu)}\leq C_{13}p_k^2(||v||^{p_k}_{L^{p_0p_k}(d\nu)}+1)
\end{equation}
Let $Y_k = \max\{||v||_{L^{p_0p_k}(d\nu)}, 1\}$. Then
\begin{gather}
Y_{k+1}^{p_k}\leq 2C_{13}p_k^2 Y_{k}^{p_k}\\
\log Y_{k+1} = \log Y_k + \frac{1}{p_k}(\log p_k + C_{14})
\end{gather}
It is clear that 
\begin{gather}
\sum_{k=0}^\infty \frac{1}{p_k}(\log p_k + C_{14}) = \sum_{k=0}^\infty \frac{2p_0}{\sigma^k}(k\log \sigma + C_{15})
\end{gather}
is finite. Hence,
\end{comment}
A similar iteration argument as in the end of proof of section 4 shows that
\begin{gather}
||v||_{L^\infty} \leq C||v||_{L^1(d\nu)}\leq C\int v dV,
\end{gather}
which gives us
\begin{gather}
||w||_{L^\infty} \leq C\int w dV\leq C\int (tr_gg') dV \leq C.
\end{gather}
We finish the proof.
\end{proof}

\section{H\"{o}lder estimate of second order and $W^{3,p_0}$ estimate}

\subsection{H\"{o}lder estimate of the second order}
The H\"{o}lder estimates of second order derivatives are studied for uniform elliptic operators when the right hand side has weaker regularity. For Monge-Amp\'{e}re equation, Blocki proved that the H\"{o}lder esitmates hold when $F$ is Lipschitz and $\Delta\phi$ is bounded \cite{blocki}. Chen-He extended Blocki's result to the case when $F$ is only $W^{1,p_0},\, p_0>2n$ \cite{Chen-He}. The estimate can be made local. We collect Chen-He's result in the following lemma.

\begin{lemma}[\cite{Chen-He}, Lemma 4.1]\label{lem4}
Let $v$ be a $C^4$-psh function in an open $\Omega\subset \mathbb{C}^n$ such that $\det(v_{i\bar{j}})=f$. Assume that for some positive $\lambda,\Lambda$ and $K$, we have 
$$0<\lambda\leq \Delta v\leq \Lambda, \quad ||v||_{L^\infty}\leq K,\quad ||f||_{W^{1,p_0}}\leq K,$$
then for any $\Omega'\subset\subset \Omega$, there exists some $\alpha=\alpha(\Omega,\Omega',\lambda,\Lambda,K)$, $0<\alpha<1$, such that
$$||v||_{C^{2,\alpha}(\Omega')}\leq C(\Omega,\Omega',\lambda,\Lambda,K).$$
\end{lemma}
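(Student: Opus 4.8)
The plan is to read $\det(v_{i\bar j})=f$ as a uniformly elliptic, concave fully nonlinear equation in the complex Hessian with H\"older inhomogeneity, and then to run the Evans--Krylov interior estimate; the weak regularity of $f$ will only cost a smaller H\"older exponent at the end.

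\textbf{Uniform ellipticity.} Since $v$ is $C^4$ and plurisubharmonic, $(v_{i\bar j})$ is a nonnegative Hermitian matrix, and $\Delta v\le\Lambda$ puts all of its eigenvalues in $[0,\Lambda]$. Using in addition a positive lower bound $f\ge c_0>0$ — which holds in every application of the lemma, since there $f=e^F$ with $F$ bounded by Corollary~\ref{cor2.3.1} — each eigenvalue is then $\ge c_0\Lambda^{1-n}$, so the eigenvalues of $(v_{i\bar j})$ are pinched in $[c_0\Lambda^{1-n},\Lambda]$ and those of $(v^{i\bar j})$ in $[\Lambda^{-1},c_0^{-1}\Lambda^{n-1}]$, with all constants controlled by $\lambda,\Lambda,K,n$. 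Writing $g:=\log f$, the equation reads $\log\det(v_{i\bar j}(x))=g(x)$; since $A\mapsto\log\det A$ is concave on positive Hermitian matrices and $(v_{i\bar j})$ is an $\mathbb{R}$-linear function of the real Hessian of $v$, the operator is concave and uniformly elliptic on the admissible range (ellipticity ratio $\le\Lambda^n c_0^{-1}$), although degenerate in the holomorphic-Hessian directions. Finally, as $p_0>2n=\dim_{\mathbb R}\Omega$, the Sobolev embedding gives $g=\log f\in C^{0,\alpha_0}_{loc}$ with $\alpha_0=1-2n/p_0$, and $\|g\|_{C^{0,\alpha_0}}$ on relatively compact subsets is controlled by $c_0$ and $\|f\|_{W^{1,p_0}}\le K$.

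\textbf{The Evans--Krylov step.} Differentiating $\log\det(v_{i\bar j})=g$ once along a unit vector $\xi$ gives $v^{i\bar j}(v_\xi)_{i\bar j}=\partial_\xi g\in L^{p_0}$, a uniformly elliptic non-divergence-form equation with merely measurable coefficients; differentiating a second time and discarding the favourably signed term coming from the concavity of $\log\det$ shows that each pure second derivative $v_{\xi\bar\xi}$ satisfies a one-sided uniformly elliptic differential inequality. Since $g$ has no second derivatives, this must be done with second-order incremental quotients, so that only $C^{0,\alpha_0}$ increments of $g$ enter; one then obtains for the largest and smallest eigenvalues of $(v_{i\bar j})(x)$ uniformly elliptic inequalities, valid in the Aleksandrov sense, whose inhomogeneous terms are bounded in $L^{p_0}$ — and $p_0>2n$ is precisely what makes these terms subcritical for the Aleksandrov--Bakelman--Pucci estimate. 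Feeding this into the $L^p$ form of the Krylov--Safonov weak Harnack inequality on dyadic balls, coupled over a finite net of directions $\xi$ by the usual concavity/measure-covering argument, yields geometric decay $\operatorname{osc}_{B_r(x_0)}(v_{i\bar j})\le C r^{\alpha}$ for some $\alpha\in(0,\alpha_0]$ and all small $r$, with $C$ and $\alpha$ depending only on $\Omega,\Omega',\lambda,\Lambda,K,n$. (Equivalently, one may simply quote the interior $C^{2,\alpha}$ estimate for concave uniformly elliptic equations with $C^\alpha$-in-$x$ inhomogeneity, in its complex degenerate version.) Hence $\partial\bar\partial v\in C^{0,\alpha}_{loc}$; in particular $\Delta_{\mathbb R}v=4\sum_i v_{i\bar i}\in C^{0,\alpha}_{loc}$, and interior Schauder estimates for the constant-coefficient Laplacian upgrade this to $\|v\|_{C^{2,\alpha}(\Omega')}\le C(\Omega,\Omega',\lambda,\Lambda,K)$, as claimed.

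\textbf{Main difficulty.} The heart of the matter is the weak regularity of $f$: $g=\log f$ has no second derivatives, so the Evans--Krylov argument cannot be taken off the shelf but must be carried out in the ``$L^p$'' regime — with the Aleksandrov--Bakelman--Pucci estimate and the Krylov--Safonov Harnack inequality applied to inhomogeneous terms only in $L^{p_0}$ rather than continuous — while tracking every constant so that it depends only on the ellipticity ratio (hence on $\lambda,\Lambda,c_0,n$) and on $\|g\|_{C^{0,\alpha_0}}$, $\|g\|_{W^{1,p_0}}$ (hence on $K$). The hypothesis $p_0>2n$ is sharp for this and matches the Sobolev exponent used above. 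A secondary, routine point is that the equation is elliptic only on the complex Hessian, so one must observe that H\"older control of $(v_{i\bar j})$ alone already yields the full real $C^{2,\alpha}$ bound — which it does, via Schauder for $\Delta_{\mathbb R}$.
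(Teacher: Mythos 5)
The paper does not prove this lemma; it is quoted verbatim as \cite{Chen-He}, Lemma~4.1, so there is no in-paper argument to compare line by line. Evaluating your sketch on its own terms: the overall route — Sobolev embedding $W^{1,p_0}\hookrightarrow C^{0,\alpha_0}$ with $\alpha_0=1-2n/p_0$ to upgrade the right-hand side to H\"older, pinch the eigenvalues of $(v_{i\bar j})$ by $\Delta v\le\Lambda$ from above and by $\det\ge c_0$ from below, run an Evans--Krylov/Krylov--Safonov oscillation-decay on the complex Hessian, then Schauder for $\Delta_{\mathbb R}v$ to get the full $C^{2,\alpha}$ bound — is a legitimate one and is close in spirit to the Blocki/Chen--He argument. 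You also correctly put your finger on the fact that the statement as transcribed has no lower bound on $f$: $\lambda\le\Delta v$ alone does not prevent an eigenvalue of $(v_{i\bar j})$ from vanishing when $n\ge2$, so uniform ellipticity genuinely needs $f\ge c_0>0$ (implicit in Chen--He and satisfied in every application in this paper, where $f$ is of the form $e^{\tilde F+\epsilon\tilde\phi}\det(g_0)_{i\bar j}$).

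The gap is in the middle step, which is asserted rather than carried out. First, the operator $\log\det$ acts on the complex Hessian and is degenerate as a function of the real Hessian, so Evans--Krylov cannot be ``taken off the shelf''; it must be run for the restricted concave operator, and that adaptation is precisely what makes Blocki's and Chen--He's proofs nontrivial. Second, your description mixes two regimes: you write ``inhomogeneous terms only in $L^{p_0}$'', but after Morrey embedding you in fact have $g\in C^{0,\alpha_0}$, and what is actually needed is the H\"older-in-$x$ version of the interior $C^{2,\alpha}$ estimate for concave operators (Caffarelli/Safonov), not an ABP estimate with $L^{p_0}$ data — ABP controls $L^\infty$ of $v$, not oscillation of $D^2v$, and by itself does not close the iteration. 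Making the difference-quotient argument rigorous for the degenerate complex operator with only $C^{0,\alpha_0}$ increments of $g$, and tracking the constants through the measure-covering step, is exactly the content one would have to supply (or cite precisely); as written, the proposal names the ingredients but does not assemble them.
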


We could not apply the local H\"{o}lder estimate directly as Chen-He did in the compact case, since $(M,\omega)$ is complete noncompact and can not be covered by finitely many local coordinates charts. Instead, we consider to pull back the Monge-Amper\'{e} equation to the quasi-coordinate charts and obtain the uniform $C^{2,\alpha}_{qc}$ norm bound. In particular, we have

\begin{proposition}
Let $F\in C^\infty_c(M,g)$ satisfies $I(F,p_0)<\infty$, and $\phi_\epsilon$ be the solution of equation (\ref{equ1}). Then we have 
\[
||\phi_\epsilon||_{C^{2,\alpha}_{qc}(M,g)} \leq C
\]
for some $\alpha,\, 0 <\alpha < 1$ and postive constant $C$ depending on $(I(F,p_0),\, p_0,\, n,\, g)$.
\end{proposition}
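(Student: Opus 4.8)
The plan is to transfer the local H\"older estimate of Lemma \ref{lem4} from bounded Euclidean domains to $(M,\omega)$ by working in the quasi-coordinate charts $(\frac{3}{4}\mathcal{P}_k;\Phi_\delta)$ constructed in Section 2.2. Since the quasi-coordinate atlas consists of a fixed finite number of unit balls away from $D$, together with charts $\Phi_\delta:\frac{3}{4}\mathcal{P}_k\to U\backslash D$ whose pulled-back metrics $\Phi_\delta^*g$ are all $C^\infty$-quasi-isometric to the Euclidean metric $g_0$ with factor independent of $\delta$, it suffices to bound $\|\phi_\epsilon\circ\Phi_\delta\|_{C^{2,\alpha}(\frac{1}{2}\mathcal{P}_k)}$ uniformly in $\delta$ and $\epsilon$, and similarly on the finitely many balls. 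The charts away from $D$ are handled exactly as in the compact case of \cite{Chen-He}, so the content is near the divisor.

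First I would pull back the Monge-Amp\`ere equation (\ref{equ1}) under $\Phi_\delta$. Writing $\tilde\phi=\phi_\epsilon\circ\Phi_\delta$, $\tilde\omega=\Phi_\delta^*\omega$ and $\tilde F = F\circ\Phi_\delta$, the equation becomes $\det(\tilde g_{i\bar j}+\tilde\phi_{i\bar j}) = e^{\tilde F+\epsilon\tilde\phi}\det(\tilde g_{i\bar j})$ on $\frac{3}{4}\mathcal{P}_k$. Since $\tilde g$ is smooth and uniformly (in $\delta$) comparable to $g_0$ with all derivatives controlled, I would absorb $\tilde g$ into the unknown by considering a local K\"ahler potential: on the polydisc one can write $\tilde\omega = \ddbar\tilde h$ for a smooth $\tilde h$ with $\|\tilde h\|_{C^{3,\alpha}(\frac{3}{4}\mathcal{P}_k)}$ bounded independently of $\delta$ (this uses that $\Phi_\delta^*\omega$, being a fixed model-type form pulled back, has uniformly bounded geometry — cf. property (1) of the quasi-coordinate maps). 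Then $v := \tilde h + \tilde\phi$ is psh with $\det(v_{i\bar j}) = e^{\tilde F+\epsilon\tilde\phi}\det(\tilde h_{i\bar j}) =: \hat f$. Now I verify the hypotheses of Lemma \ref{lem4} with $\Omega=\frac{3}{4}\mathcal{P}_k$, $\Omega'=\frac{1}{2}\mathcal{P}_k$: the bound $\|v\|_{L^\infty}\le K$ follows from Proposition \ref{prop} (uniform $C^0$ bound on $\phi_\epsilon$) and the uniform bound on $\tilde h$; the two-sided bound $0<\lambda\le\Delta_{g_0} v\le\Lambda$ follows from Theorem \ref{thm3} (uniform bound on $\Delta\phi_\epsilon$ in the $g$-metric), the quasi-isometry $\tilde g\sim g_0$, the uniform ellipticity $0<\lambda\le\tilde g_{i\bar j}$, and the fact that $n+\Delta_g\phi_\epsilon = \mathrm{tr}_g g' > 0$ together with the lower bound $\mathrm{tr}_{g'}g\ge e^{-F/(n-1)}(\mathrm{tr}_g g')^{1/(n-1)}$, which forbids $\Delta v$ from degenerating to $0$; and $\|\hat f\|_{W^{1,p_0}(\frac{3}{4}\mathcal{P}_k)}\le K$ is where the weight enters — I must check $\int_{\frac{3}{4}\mathcal{P}_k}(|\tilde F|^{p_0}+|\nabla\tilde F|^{p_0})\,dV_{g_0}$ is bounded uniformly in $\delta$.

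The main obstacle is exactly this last point: controlling $\|\tilde F\|_{W^{1,p_0}(\frac{3}{4}\mathcal{P}_k)}$ uniformly in $\delta$ from the weighted quantity $I(F,p_0) = \int_M(|F|^{p_0}+|\nabla F|^{p_0})\rho^{(p_0-2)/(2n-2)}\,dV$. Here I would invoke Lemma \ref{lem3}: summing the left-hand inequality over the chart, $\sum_\ell A_{\delta_\ell}\int_{\frac{3}{4}\mathcal{P}_k}|\Phi_{\delta_\ell}^*(|F|^{p_0}+|\nabla F|^{p_0})|\,dV_{g_0} \le c\int_{U\backslash D}(|F|^{p_0}+|\nabla F|^{p_0})\,dV$, and one needs the weight $\rho^{(p_0-2)/(2n-2)}$ on the right — which is $\ge 1$ — to merely ensure finiteness, but for the \emph{uniform-in-$\delta$} bound on a single chart one uses that $\Phi_\delta^*\rho$ is mutually bounded with $A_\delta^{-1}$, so that $A_{\delta_\ell}\cdot(\Phi_{\delta_\ell}^*\rho)^{(p_0-2)/(2n-2)}$ is bounded below by a positive constant when $p_0 > 2n$ (since then $(p_0-2)/(2n-2) > 1 \ge$ the exponent needed to beat $A_{\delta_\ell}$... more precisely $(p_0-2)/(2n-2)\ge 1$ suffices as $A_{\delta_\ell}\le 1$, but one wants the power of $A_{\delta_\ell}^{-1}$ at least $1$, which holds for $p_0\ge 2n$). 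This yields $\int_{\frac{3}{4}\mathcal{P}_k}(|\tilde F|^{p_0}+|\nabla\tilde F|^{p_0})\,dV_{g_0}\le C\, I(F,p_0)$ for each $\delta$, with $C$ independent of $\delta$; note the pull-back of $|\nabla_g F|$ and the Euclidean $|\nabla_{g_0}\tilde F|$ are comparable by the uniform quasi-isometry. Once all four hypotheses of Lemma \ref{lem4} hold with constants independent of $\delta$ and $\epsilon$, the lemma gives $\|v\|_{C^{2,\alpha}(\frac{1}{2}\mathcal{P}_k)}\le C$, hence $\|\tilde\phi\|_{C^{2,\alpha}(\frac{1}{2}\mathcal{P}_k)}\le C$ after subtracting the uniformly controlled $\tilde h$; taking the supremum over all quasi-coordinate charts and the finitely many Euclidean balls yields $\|\phi_\epsilon\|_{C^{2,\alpha}_{qc}(M,g)}\le C$, which is the assertion.
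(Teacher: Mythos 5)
Your proposal is correct and takes essentially the same route as the paper: pull back the equation to the quasi-coordinate charts $\Phi_\delta$, pass to a local K\"ahler potential so Lemma \ref{lem4} applies, and control $\|\Phi_\delta^*F\|_{W^{1,p_0}(\frac{3}{4}\mathcal{P}_k)}$ uniformly in $\delta$ via Lemma \ref{lem3} together with the mutual boundedness of $\Phi_\delta^*\rho$ and $A_\delta^{-1}$, the hypothesis $p_0\geq 2n$ guaranteeing $\rho \le \rho^{(p_0-2)/(2n-2)}$. One small imprecision: to see $\Delta_{g_0}v$ is bounded below you should cite $\mathrm{tr}_g g' \ge n(\det g'/\det g)^{1/n} = n e^{(F+\epsilon\phi)/n}$ (arithmetic--geometric mean) rather than the inequality $\mathrm{tr}_{g'}g\ge e^{-F/(n-1)}(\mathrm{tr}_g g')^{1/(n-1)}$, which does not by itself prevent $\mathrm{tr}_g g'$ from degenerating.
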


\begin{proof}
For simplicity of notations, we will drop the subscript $\epsilon$ from $\phi_\epsilon$. 
For a quasi-coordinate chart $\Phi_\delta: \frac{3}{4}\mathcal{P}_k\rightarrow U\backslash D$, we pull back the equation (\ref{equ1}) to quasi-coordinate chart. Let $\tilde{g}={\Phi_\delta}^*g,\, \tilde{\phi}={\Phi_\delta}^*\phi$, and $\tilde{F}={\Phi_\delta}^*F$, we have
%\begin{equation}
%(\tilde{\omega} + \ddbar{\tilde{\phi}})^n = e^{\tilde{F}+\epsilon\tilde{\phi}} {\tilde{\omega}}^n
%\end{equation}
\begin{equation}\label{equ18}
\det\left(\tilde{g}_{i\bar{j}}+\tilde{\phi}_{i\bar{j}}\right) = \exp(\tilde{F}+\epsilon\tilde{\phi})\det\left(\tilde{g}_{i\bar{j}}\right).
\end{equation}
on the polydisc $\frac{3}{4}\mathcal{P}_k\subset \mathbb{C}^n$. Note that we have $\tilde{g}$ quasi-isometric to the Euclidean metric $g_0$ on $\polydisc$. Find some local potential $G_0$ on $\mathcal{P}$ such that $\tilde{g}_{i\bar{j}}=(G_0)_{i\bar{j}}$ and rewrite (\ref{equ18}) in $\frac{3}{4}\mathcal{P}_k$ as 
\begin{equation}
\det(\vartheta_{i\bar{j}})= f
\end{equation}
where $\vartheta = G_0+\tilde{\phi}$ and $f=\exp(\tilde{F}+\epsilon\tilde{\phi})\det((g_0)_{i\bar{j}})$. Since $||\phi||_{L^\infty}$ and $||\Delta \phi||_{L^\infty}$ are uniformly bounded, we can assume that $0< \lambda \leq \Delta \vartheta \leq \Lambda$ and $||\vartheta||_{L^\infty}\leq C$. We claim that $f
\in W^{1,p_0}(\frac{3}{4}\mathcal{P}_k)$ and $||f||_{W^{1,p_0}(\frac{3}{4}\mathcal{P}_k)}\leq C$. To see this, first note that $||f||_{L^\infty}\leq C$, it suffices to show that $||\nabla_{g_0} \tilde{F}||_{L^p(\frac{3}{4}\mathcal{P}_k)}\leq C$. 
As indicated in the proof of Lemma \ref{lem3}, we have
\begin{equation}
\begin{aligned}
\int_{\polydisc} |\nabla_{\tilde{g}} \tilde{F}|^{p_0} dV_{\tilde{g}}
\leq&\ C\int_{\frac{3}{4}\mathcal{P}_k} |\nabla_{g_0} \tilde{F}|^{p_0}dV_{g_0}  \\
\leq&\ C\int_{\frac{3}{4}\mathcal{P}_k} ({\Phi_\delta}^*\rho)^{-1} {\Phi_\delta}^*(|\nabla F|^{p_0}\rho)dV_{g_0} \\
\leq&\ C\int_{U\backslash D} |\nabla F|^{p_0}\rho dV \\
\leq&\ I(F,p_0).\\
\end{aligned}
\end{equation}
Hence, by Lemma \ref{lem4}, we have 
\[
||\vartheta||_{C^{2,\alpha}(\frac{3}{4}\mathcal{P}_k)}\leq C
\]
The $\alpha$ and $C$ are uniform with respect to the quasi-coordinate charts. Hence, take the supreme over all the quasi-coordinate charts, we get 
\[
||\phi||_{C^{2,\alpha}_{qc}}\leq C.
\]
\end{proof}

\subsection{$W^{3,p_0}$ estimate and solve the equation}
To obtain $W^{3,p_0}$ estimate, we can localize the estimates in the quasi-coordinate charts as follows. Let $\partial$ be an arbitrary first order differential operator in the quasi-coordinate chart $\frac{3}{4}\mathcal{P}_k$. Once the H\"{o}lder estimate of second order is proved, we compute in the quasi-coordinate chart
\begin{equation}
\Delta_{\tilde{g}} \partial \tilde{\phi} = \partial(\tilde{F}+\epsilon\tilde{\phi}+\log\det(\tilde{g}_{i\bar{j}}))  - \tilde{g}^{i\bar{j}}_{\tilde{\phi}} \partial\tilde{g}_{i\bar{j}}.
\end{equation}
Note that we already have $||\tilde{\phi}||_{C^{2,\alpha}(\polydisc)}$ and $||\tilde{F}||_{W^{1,p_0}(\polydisc)}$ bounded, hence the $L^{p_0}$ norm of right hand side bounded. It then follows from $L^p$ theory, for example see \cite{GT} Chapter 9, that
\begin{equation}
||\partial\tilde{\phi}||_{W^{2,p_0}(\frac{1}{2}\mathcal{P}_k)}
\leq C.
\end{equation}
Namely,
\begin{equation}
||{\Phi_\delta}^*\phi||_{W^{3,p_0}(\frac{1}{2}\mathcal{P}_k)} 
\leq C.
\end{equation}
By Lemma \ref{lem3}, we have
\begin{equation}
\begin{aligned}
||\phi||^{p_0}_{W^{3,p_0}(U\backslash D)} 
\leq&\ c \sum_{\ell} A_{\delta_\ell} ||{\Phi_\delta}^*\phi||^{p_0}_{W^{3,p_0}(\frac{1}{2}\mathcal{P}_k)} \\
\leq&\ C \sum_{\ell} A_{\delta_\ell} \int_{\polydisc} 1 dV_{g_0}\\
\leq&\ C \int_{U\backslash D} 1 dV \\
\leq&\ CVol(M).
\end{aligned}
\end{equation}
Cover $D$ by finitely many such $U$'s and cover the complement of the union of these $U$'s by a finite number of unit balls. Collect the inequalites on each of them, we get the following proposition.

\begin{proposition} \label{prop2}
Let $F\in C^\infty_c(M,g)$ satisfies $I(F,p_0)<\infty$, and $\phi_\epsilon$ be the solution of equation (\ref{equ1}). Then we have 
\begin{equation}
||\phi_\epsilon||_{W^{3,p_0}(M)}\leq C,
\end{equation}
where $C$ depends only on $(I(F,p_0),p_0,n,\omega)$.
\end{proposition}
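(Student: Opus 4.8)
The plan is to bootstrap: from the uniform $C^0$, $C^1$, $C^2$ bounds already established (Proposition~\ref{prop}, Theorem~\ref{thm2}, Theorem~\ref{thm3}) together with the uniform $C^{2,\alpha}_{qc}$ bound proved just above, I would differentiate the Monge--Amp\`ere equation once, apply the Calder\'on--Zygmund $L^{p}$ theory for uniformly elliptic operators, and then reassemble the pieces into a global estimate. The one real novelty compared with the compact case is that $(M,\omega)$ has no finite coordinate cover (the injectivity radius degenerates along $D$), so all of the elliptic regularity must be run in the quasi-coordinate charts $\Phi_\delta\colon \polydisc\to U\setminus D$, and the covering Lemma~\ref{lem3} is exactly what converts uniform local estimates into a finite global one.

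Concretely, fix a quasi-coordinate chart and pull back (\ref{equ1}) to $\polydisc$, writing it in local-potential form $\det(\vartheta_{i\bar j})=f$ with $\vartheta=G_0+\widetilde\phi$, $\widetilde g=\Phi_\delta^*g=(G_0)_{i\bar j}$, and $f=e^{\widetilde F+\epsilon\widetilde\phi}$ times a fixed smooth positive function. Since $\Phi_\delta^*g$ is $C^\infty$-quasi-isometric to the Euclidean metric $g_0$ with constants independent of $\delta$, and $\|\phi\|_{L^\infty},\|\Delta\phi\|_{L^\infty}$ are uniformly bounded, we have $0<\lambda\le\Delta\vartheta\le\Lambda$, $\|\vartheta\|_{L^\infty}\le C$, and (using $|\nabla_{g_0}\widetilde F|\approx|\nabla_{\widetilde g}\widetilde F|=\Phi_\delta^*|\nabla F|$, together with $\rho\ge1$ and $\rho^{(p_0-2)/(2n-2)}\ge\rho$ for $p_0>2n$) $\int_{\polydisc}|\nabla_{g_0}\widetilde F|^{p_0}\,dV_{g_0}\le C\,I(F,p_0)$, so $f\in W^{1,p_0}(\polydisc)$ uniformly; Lemma~\ref{lem4} then gives the uniform $C^{2,\alpha}$ bound on $\vartheta$, i.e.\ the preceding proposition. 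Now apply an arbitrary first-order constant-coefficient operator $\partial$ to the logarithm of the pulled-back equation: differentiating $\log\det(\widetilde g_{i\bar j}+\widetilde\phi_{i\bar j})=\widetilde F+\epsilon\widetilde\phi+\log\det\widetilde g_{i\bar j}$ yields a linear second-order equation for $\partial\widetilde\phi$ whose leading coefficients are $\widetilde g_\phi^{\,i\bar j}$ — uniformly elliptic and $C^\alpha$ by the $C^{2,\alpha}$ bound — and whose right-hand side ($\partial\widetilde F+\epsilon\,\partial\widetilde\phi$ plus first-order terms in $\partial\widetilde\phi$ and $\partial\widetilde g$) lies in $L^{p_0}(\polydisc)$ with a uniform bound. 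Interior $L^{p}$ estimates (e.g.\ \cite{GT}, Chapter~9) then give $\|\partial\widetilde\phi\|_{W^{2,p_0}(\frac12\mathcal{P}_k)}\le C$, hence $\|\Phi_\delta^*\phi\|_{W^{3,p_0}(\frac12\mathcal{P}_k)}\le C$, with $C$ independent of $\delta$ and $\epsilon$.

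Finally, patch: by Lemma~\ref{lem3} choose a sequence $(\delta_\ell)$ so that $\bigcup_\ell\Phi_{\delta_\ell}(\frac12\mathcal{P}_k)\supseteq U\setminus D$ with weights $A_{\delta_\ell}$ for which $\int_{U\setminus D}|h|\,dV$ is comparable to $\sum_\ell A_{\delta_\ell}\int_{\frac12\mathcal{P}_k}|\Phi_{\delta_\ell}^*h|\,dV_{g_0}$; applying this to the $W^{3,p_0}$-density of $\phi$ and using the uniform local bound together with $\|\Phi_{\delta_\ell}^*\phi\|_{W^{3,p_0}(\frac12\mathcal{P}_k)}^{p_0}\le C=C\int_{\frac12\mathcal{P}_k}1\,dV_{g_0}$ gives $\|\phi\|_{W^{3,p_0}(U\setminus D)}^{p_0}\le C\sum_\ell A_{\delta_\ell}\int_{\frac12\mathcal{P}_k}1\,dV_{g_0}\le C\int_{U\setminus D}1\,dV\le C\,\mathrm{Vol}(M)$. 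Covering $D$ by finitely many such polydiscs $U$ and the complement of their union by finitely many Euclidean unit balls (where $\rho$ is bounded and the argument reduces to the standard interior estimate), and summing, yields the global bound $\|\phi_\epsilon\|_{W^{3,p_0}(M)}\le C(I(F,p_0),p_0,n,\omega)$. I expect the main obstacle to be precisely this reassembly: unlike the compact case, the uniform local estimates must be summed over an \emph{infinite} family of charts, and it is only the weights $A_{\delta_\ell}$ built into Lemma~\ref{lem3} that turn $\sum_\ell\mathrm{const}$ into a quantity controlled by $\mathrm{Vol}(M)$ — one also has to keep the comparison between the $g$-based $W^{3,p_0}$ norm on $M$ and the Euclidean $W^{3,p_0}$ norms in the charts under control, which is where the uniform $C^\infty$-quasi-isometry of $\Phi_\delta^*g$ with $g_0$ is used.
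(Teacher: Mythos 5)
Your proposal is correct and follows essentially the same route as the paper: pull back the equation to the quasi-coordinate charts, differentiate the log of the Monge--Amp\`ere equation, apply interior $L^{p}$ estimates from \cite{GT} to $\partial\widetilde\phi$ using the uniform $C^{2,\alpha}_{qc}$ and $W^{1,p_0}$ data, and then use both halves of Lemma~\ref{lem3} (first to bound $\|\phi\|_{W^{3,p_0}(U\setminus D)}^{p_0}$ by the weighted sum of chartwise norms, then with $f\equiv 1$ to bound that weighted sum by $\mathrm{Vol}(M)$). The one spot where you should be slightly more explicit, matching the paper, is that the summability of the uniform local constants over the infinite chart family is exactly the point of inserting $\int_{\frac12\mathcal{P}_k}1\,dV_{g_0}$ and invoking the \emph{left} half of Lemma~\ref{lem3}; otherwise the argument and its conclusion coincide with the paper's.
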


The following lemma show us that $F$ can be approximated by $C^\infty_c$ functions in the weighted Sobolev spaces.
\begin{lemma}\label{app_lem}
Suppose $F$ sastifies $I(F,p_0)\leq \infty$, then there is a sequence of $F_k\in C^\infty_c$ such that $I(F-F_k,p_0)\rightarrow 0$ as $k\rightarrow 0$. In particular, $F_k\rightarrow F$ in $W^{1,p_0}(M,g)$.
\end{lemma}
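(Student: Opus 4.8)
The plan is to prove Lemma~\ref{app_lem} by a two-stage approximation: first cut off $F$ near the divisor $D$ to make it compactly supported in $M$, controlling the error in the weighted norm, and then mollify the resulting compactly supported function by a standard convolution argument on $M$ (which is harmless since away from $D$ the weight $\rho$ is smooth and bounded and the metric is uniformly comparable to the Euclidean one on the relevant charts). Concretely, I would introduce a family of cutoff functions $\chi_t$ built from the $\rho_j$'s — for instance $\chi_t = \prod_j \eta(\rho_j/t)$ or, in the single-crossing model where $\rho_1 \sim -\log|z_1|^2$, a logarithmic cutoff $\chi_t = \zeta\!\left(\frac{\log \rho_1}{\log t}\right)$ — chosen so that $\chi_t \equiv 1$ on $\{\rho \le t\}$, $\chi_t \equiv 0$ on $\{\rho \ge t^2\}$ (say), $0 \le \chi_t \le 1$, and crucially $|\nabla \chi_t| \to 0$ pointwise with a uniform bound, exploiting the bound $\sup |\nabla \rho|/\rho \le B$ from the third curvature lemma.

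The key steps, in order, are: (1) write $F - \chi_t F = (1-\chi_t)F$ and estimate $I((1-\chi_t)F, p_0)$. The zeroth-order part $\int |(1-\chi_t)F|^{p_0}\rho^{(p_0-2)/(2n-2)}\,dV \le \int_{\{\rho \ge t\}} |F|^{p_0}\rho^{(p_0-2)/(2n-2)}\,dV \to 0$ as $t\to\infty$ by dominated convergence, since $I(F,p_0)<\infty$ and the integrand is in $L^1$ with respect to the weighted measure. (2) For the gradient part, expand $\nabla\big((1-\chi_t)F\big) = (1-\chi_t)\nabla F - F\,\nabla\chi_t$; the first term is handled exactly as in (1), and for the second term one estimates $\int |F|^{p_0}|\nabla\chi_t|^{p_0}\rho^{(p_0-2)/(2n-2)}\,dV$. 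Here I would use that $|\nabla \chi_t|$ is supported in the annular region $\{t \le \rho \le t^2\}$ and is bounded there (by the logarithmic-cutoff construction together with $|\nabla\rho|/\rho \le B$, one gets $|\nabla\chi_t| \le C/\log t$ or similar), so this term is bounded by $\frac{C}{(\log t)^{p_0}}\int_{\{\rho\ge t\}} |F|^{p_0}\rho^{(p_0-2)/(2n-2)}\,dV \to 0$. (3) Having shown $I(F - \chi_t F, p_0) \to 0$, fix $t$ large so $\chi_t F$ is supported in a fixed compact $K \subset M$, and mollify: cover $K$ by finitely many coordinate balls, use a partition of unity, and convolve; since on $K$ the weight $\rho$ and all its derivatives are bounded above and below and the metric $g$ is smooth and non-degenerate, standard mollification gives $F_k \in C^\infty_c(M)$ with $\|F_k - \chi_t F\|_{W^{1,p_0}(K)} \to 0$, which controls $I(\chi_t F - F_k, p_0)$ up to the fixed constant $\sup_K \rho^{(p_0-2)/(2n-2)}$. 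A diagonal argument then produces the desired sequence. The final sentence of the lemma is immediate since $\rho \ge 1$ forces $I(G,p_0) \ge \|G\|_{W^{1,p_0}}^{p_0}$.

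The main obstacle I anticipate is the gradient estimate for the cutoff error, step (2): a naive cutoff in $|\sigma_j|_j$ (linear in the coordinate $|z_j|$) would produce $|\nabla\chi_t|$ terms that are \emph{not} small, because the cusp metric makes $|\nabla|z_j||$ blow up like $|z_j|\log^2|z_j|^{-1}$ near $D$. The resolution is precisely to cut off in a \emph{logarithmic} variable adapted to the metric — i.e. in $\log\rho_j$ rather than in $\rho_j$ or $|\sigma_j|_j$ — so that the chain rule produces the extra factor $1/\log t$ that drives the error to zero. One must also be slightly careful that the support of $\nabla\chi_t$ shrinks appropriately (or at least that the weighted measure of the shrinking annulus, against $|F|^{p_0}$, tends to zero), but this again follows from dominated convergence applied to the tail $\int_{\{\rho \ge t\}}$. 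Everything else is routine: the interior mollification on the fixed compact set is classical, and the weight plays no role there beyond contributing a harmless bounded factor.
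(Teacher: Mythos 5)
Your proposal is essentially correct and follows the same overall strategy as the paper: cut off near the divisor using an exhaustion function whose gradient is uniformly bounded with respect to $\omega$, then appeal to dominated convergence for the tail $\int_{\{\rho\ge t\}}$. The one genuine difference is the choice of exhaustion. The paper invokes the Schoen--Yau theorem (Ricci bounded below implies existence of a proper smooth $d\ge Cr$ with $|\nabla d|\le C$) and sets $F_k=\chi(d/k)F$, so that the extra term from $\nabla\chi$ carries a clean prefactor $k^{-1}$. You instead build the exhaustion by hand from $\log\rho$, using precisely the estimate $|\nabla\rho|/\rho\le B$ from the curvature lemma; this is equivalent (in the Poincar\'e-type geometry $\log\rho$ is comparable to the distance to $D$, and $|\nabla\log\rho|$ is bounded), and has the minor advantage of not needing the Schoen--Yau result. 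Your self-diagnosed ``obstacle'' is actually not one: you correctly note at the end that even a cutoff in $\rho$ itself (rather than $\log\rho$) gives $|\nabla\chi_t|$ bounded though not decaying, and that the weighted integral still tends to zero by dominated convergence since $\nabla\chi_t$ is supported in $\{\rho\ge t\}$ — that observation alone suffices, and is exactly how the paper's first term (the $\int_{\{d\ge k\}}$ piece) is dispatched. Where you add something is step (3): the paper opens with ``we can assume $F$ is smooth'' and carries out only the cutoff for smooth $F$, leaving the interior smoothing unspoken; your explicit two-stage scheme (cut off first to a fixed compact $K\subset M$, then mollify on $K$ where $\rho$ and $g$ are bounded and smooth) fills that in cleanly and avoids any need to mollify near the cusp. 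So: same core argument, slightly different exhaustion function, and a more complete treatment of the smoothing step.
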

\begin{proof}
We can assume that $F$ is smooth. The Ricci curvature of $(M,g)$ is bounded from below. Let $r=r(x)$ denote the distance function to some fixed point. By a theorem of Yau (\cite{Scheon-Yau}, theorem 4.2), there is a proper $C^\infty(M)$ function $d$ such that $d\geq Cr$ and $|\nabla d|\leq C$, for some constant $C$. Let $\chi :[0,\infty)\rightarrow \mathbb{R}$ be a cut-off function such that: (i) $\chi(t) \equiv 1$  for $t \leq 1$, $\chi(t) \equiv 0$ for $t\geq 2$ and $0 \leq \chi \leq 1$; (ii) $|\chi'(t)|< 2$. Let $F_k(x)= \chi\left({d(x)}/{k}\right)F(x)$. Then $F_k\in C^\infty_c(M,g)$. It remains to show 
$I(F-F_k, p_0)\rightarrow 0$. To see this, note that $\nabla F_k = \chi\left({d}/{k}\right)\nabla F + {F}\chi'\left({d}/{k}\right)\nabla d/k$. Hence,
\[
\begin{aligned}
&\ \int ( |F-F_k|^{p_0} + |\nabla F - \nabla F_k|^{p_0} ) \rho^{\frac{p_0-2}{2n-2}} dV \\
\leq &\ \int \left(1-\chi\left({d}/{k}\right)\right)|F|^{p_0} + \left( (1-\chi({d}/{k}))|\nabla F|+Ck^{-1}|F|\right)^{p_0} \rho^{\frac{p_0-2}{2n-2}} dV \\
\leq &\ C\int_{\{d\geq k\}} (|F|^{p_0}+|\nabla F|^{p_0})\rho^{\frac{p_0-2}{2n-2}} dV + Ck^{-1}\int |F|^{p_0} \rho^{\frac{p_0-2}{2n-2}}dV
\end{aligned}
\]
The RHS goes to $0$ as $k\rightarrow \infty$.
\end{proof}

Finally, we proof the main theorem. 
\begin{proof}[Proof of theorem \ref{thm1}]
Suppose $I(F,p_0)\leq K$ for some constant $K$. Let $F_k$ be a sequence of smooth functions with compact support such that $I(F_k,p_0)\rightarrow I(F,p_0)$; in particular, we can assume $I(F_k,p_0)\leq K+1$ for any $k$. For each $\epsilon$ and $k$, there is a smooth solution $\phi_{\epsilon,k}$ which solves the perturbed equation
\begin{equation}
(\omega + \ddbar \phi_{\epsilon,k})^n = e^{F_k+\epsilon \phi_{\epsilon,k}}\omega^n
\end{equation}
such that $(g_{i\bar{j}}+(\phi_{\epsilon,k})_{i\bar{j}})>0$. By Proposition \ref{prop2} we have 
\begin{equation}
||\phi_{\epsilon,k}||_{W^{3,p_0}(M,g)}\leq C(K,p_0,n,g)
\end{equation}
There is a subsequence of $(\phi_{\epsilon,k})$ that converges to some $\phi\in W^{3,p_0}(M,g)$ such that $\omega + \ddbar\phi>0$ defines a $W^{1,p_0}$ (and $C^{\alpha}_{qc},\, \alpha= 1-2n/p_0$) K\"{a}hler metric. %The solution is actually unique.
\end{proof}

%\bibliography{reference.bib}
%\bibliographystyle{plain}

\vspace{2em}

\footnotesize{\textsc{Department of Mathematics, Stony Brook University, Stony Brook, NY, 11794}}

\footnotesize{Email: \texttt{fangyu.zou@stonybrook.edu}}
\end{document}